\documentclass[11pt,letterpaper]{article}

\usepackage[T1]{fontenc}
\usepackage[utf8]{inputenc}
\usepackage{lmodern}
\usepackage{amsmath,amssymb,amsthm,mathtools}
\usepackage[margin=0.72in,includefoot]{geometry}
\usepackage{microtype}
\usepackage{array}
\usepackage{authblk}
\usepackage[hidelinks]{hyperref}

\allowdisplaybreaks
\newtheorem{theorem}{Theorem}[section]
\newtheorem{lemma}[theorem]{Lemma}

\newcommand{\ex}{\operatorname{ex}}
\newcommand{\N}{\operatorname{N}}
\newcommand{\ind}{\mathbf{1}}

\title{The exact generalized Tur\'an number for \(C_6\) in \(C_8\)-free graphs}
\author[1]{Zian Chen\thanks{Email: \texttt{michaelchen24@163.com}}}
\author[1]{Jinghua Deng\thanks{Email: \texttt{Jinghua\_deng@163.com}}}
\affil[1]{Center for Discrete Mathematics,
            Fuzhou University, Fujian, 350003, China}
\date{}

\begin{document}

\maketitle

\begin{center}
  \begin{minipage}{0.88\textwidth}
    \begin{center}
      \textbf{Abstract}
    \end{center}
    For graphs $F$ and $H$, let $\ex(n,F,H)$ denote the maximum number of copies of $F$ in an $n$-vertex $H$-free graph. Gerbner, Gy\H{o}ri, Methuku and Vizer proved that $\ex(n,C_6,C_8)=\Theta(n^3)$ and predicted that the unrestricted problem should have the same first-order asymptotics as the bipartite one. We determine the exact value for all sufficiently large $n$, showing that
\[
  \ex(n,C_6,C_8)=6\binom{n-3}{3}+12(n-5).
\]
Moreover, the unique extremal graph is $K_3\vee (K_2\cup I_{n-5})$. The main new ingredient is a codegree decomposition for $C_8$-free graphs: a packing lemma for triangles in the linear-codegree graph recovers an almost spanning common neighborhood, and a defect-absorption argument upgrades this stability to the exact extremal graph.
  \end{minipage}
\end{center}

\medskip
\noindent\textbf{Keywords.} generalized Tur\'an number; even cycle; codegree; stability; exact extremal graph.

\noindent\textbf{2020 Mathematics Subject Classification.} 05C35, 05C38.

\section{Introduction}

Let $H$ be a fixed graph. A graph is $H$-free if it contains no copy of $H$ as a subgraph, and the classical Tur\'an number $\ex(n,H)$ is the maximum number of edges in an $n$-vertex $H$-free graph. Determining $\ex(n,H)$ is one of the central problems of extremal graph theory. The Erd\H{o}s--Stone--Simonovits theorem~\cite{ErdosStone1946,ErdosSimonovits1966} gives the asymptotic answer when $H$ is non-bipartite, but the bipartite case remains much more delicate. 

Among bipartite forbidden graphs, even cycles form a basic sparse test family: Bondy and Simonovits proved that every $C_{2k}$-free $n$-vertex graph has $O_k(n^{1+1/k})$ edges, and later refinements sharpen this phenomenon in several directions \cite{BondySimonovits1974,BukhJiang2017}. In particular, a $C_8$-free graph has only $O(n^{5/4})$ edges, so density alone is too crude for sharp cycle-counting questions; the decisive local quantities are usually common neighborhoods, or equivalently pair-codegrees.

This paper studies a generalized Tur\'an problem in this sparse even-cycle setting. For graphs $F$ and $H$, let $\ex(n,F,H)$ denote the maximum number of copies of $F$ in an $n$-vertex $H$-free graph; copies are not required to be induced. The ordinary Tur\'an number is the special case $F=K_2$. This viewpoint was systematized by Alon and Shikhelman \cite{AlonShikhelman2016}, building on earlier clique-counting results of Zykov~\cite{Zykov1949} and Erd\H{o}s~\cite{Erdos1962} and on further exact results such as the theorem of Gy\H{o}ri, Pach and Simonovits \cite{GyoriPachSimonovits1991} for many bipartite graphs in triangle-free hosts. For a recent survey of the area, see Gerbner and Palmer \cite{GerbnerPalmerSurvey2026}. Here we focus on the cycle-versus-cycle instance
\[
  \ex(n,C_6,C_8).
\]

For generalized Tur\'an problems between cycles, the known theory separates naturally according to the parity of the forbidden cycle. If the forbidden cycle is odd, the host graph may still be dense, and sharp results often rely on dense extremal and stability methods; for example, the maximum number of $C_5$ copies in a triangle-free graph was determined independently by Grzesik~\cite{Grzesik2012} and by Hatami, Hladk\'y, Kr\'al', Norine and Razborov~\cite{HatamiEtAl2013}, while related problems on triangles in graphs with no long odd cycle were studied by Bollob\'as and Gy\H{o}ri~\cite{BollobasGyori2008} and by Gy\H{o}ri and Li \cite{GyoriLi2012}. When the forbidden cycle is even, the host graph is sparse and codegrees become the natural substitute for edge density. At the level of exponents, Gishboliner and Shapira~\cite{GishbolinerShapira2020} gave a general classification for pairs of cycles, and Gerbner, Gy\H{o}ri, Methuku and Vizer \cite{GerbnerGyoriMethukuVizer2020} independently developed the even-cycle case. In particular, for distinct $k,\ell\ge 2$,
\[
  \ex(n,C_{2\ell},C_{2k})=\Theta(n^\ell),
\]
and hence $\ex(n,C_6,C_8)=\Theta(n^3)$. This determines the order of magnitude, but it leaves open the leading constant and the extremal configuration.

A natural next problem is therefore to determine the exact asymptotics, and ultimately the exact value, for the first unresolved sparse case $C_6$ versus $C_8$. Let $\ex_{bip}(n,C_6,C_8)$ denote the maximum number of copies of $C_6$ in a $C_8$-free bipartite graph. Gerbner, Gy\H{o}ri, Methuku and Vizer singled out this pair and proved for bipartite host graphs that
\[
  \ex_{bip}(n,C_6,C_8)=n^3+O(n^{5/2}),
\]
while asking for the unrestricted asymptotics and predicting that the first-order term should be the same \cite{GerbnerGyoriMethukuVizer2020}. The bipartite problem was later solved exactly by Gy\H{o}ri, He, Lv, Salia, Tompkins, Varga and Zhu: for every $n\ge 6$,
\[
  \ex_{bip}(n,C_6,C_8)=6\binom{n-3}{3},
\]
with equality only for $K_{3,n-3}$ \cite{GyoriHeLvSaliaTompkinsVargaZhu2025}. The same paper also determines, for every fixed $s\ge 2$ and all sufficiently large $n$, the value of
\[
  \ex(n,K_{s,s},C_{2s+2}).
\]
In the case $s=3$, this result points to the same $K_{3,n-3}$-type core as the relevant extremal configuration. However, counting $K_{3,3}$ copies does not determine the number of $C_6$ copies: each $K_{3,3}$ contains six such cycles, while internal edges may create additional $C_6$ copies not accounted for by the $K_{3,3}$-count.


Our main contribution is to convert these first-order and structural indications into an exact theorem. Let $I_t$ denote the edgeless graph on $t$ vertices, and let $G\vee H$ denote the join of two vertex-disjoint graphs. For $n\ge 5$, define
\[
  H_n:=K_3\vee (K_2\cup I_{n-5}).
\]

\begin{theorem}\label{thm:main}
For all sufficiently large $n$,
\[
  \ex(n,C_6,C_8)=6\binom{n-3}{3}+12(n-5).
\]
Moreover, up to isomorphism, the unique extremal graph is $K_3\vee (K_2\cup I_{n-5})$.
\end{theorem}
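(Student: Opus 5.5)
The proof has two halves: a lower bound from the construction $H_n$, and a matching upper bound obtained through stability followed by exact absorption.

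\textbf{Lower bound.} Let $A=\{a_1,a_2,a_3\}$ be the $K_3$ part, $\{x,y\}$ the $K_2$, and $B$ the $n-3$ vertices outside $A$. First check that $H_n$ is $C_8$-free. An $8$-cycle must use at least five vertices of $B$. Vertices of $B\setminus\{x,y\}$ are pairwise nonadjacent, and the only edge inside $B$ is $xy$. So between consecutive runs of $B$-vertices on the cycle we need distinct vertices of $A$. This forces at least $4$ vertices of $A$ when five vertices of $B$ are used, and $|A|=3$, a contradiction.

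Next count the $6$-cycles.
\begin{itemize}
\item Those using no edge inside $A$ and not the edge $xy$ are exactly the $6$-cycles of $K_{3,n-3}$, giving $6\binom{n-3}{3}$.
\item The remaining $6$-cycles use $xy$ and/or edges of $A$. The claim is that these total exactly $12(n-5)$, and this is verified by direct enumeration. A cycle using an edge $a_ia_j$ has at most two other $A$-vertices available, so it needs at least four vertices of $B$ with only one internal $B$-edge. This rules out most configurations, and the surviving ones are linear in $n$.
\end{itemize}
I would organize the enumeration by the number of $A$-edges used, and confirm the constant $12$ there.

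\textbf{Upper bound, stability.} Let $G$ be $C_8$-free with at least $6\binom{n-3}{3}+12(n-5)$ copies of $C_6$.
\begin{enumerate}
\item Count $6$-cycles via codegrees. Each $C_6$ is determined by three alternate vertices $u,v,w$ together with common neighbours of the pairs $uv$, $vw$, $wu$, so
\[
\#C_6\le \sum_{\{u,v,w\}} d(u,v)\,d(v,w)\,d(w,u)+\text{lower order}.
\]
\item $C_8$-freeness gives that the pairs of large codegree (at least $\varepsilon n$) form a graph $L$ that is itself very restricted. Two disjoint pairs of linear codegree with many common neighbours would quickly produce a $C_8$. So $L$ has $O(1)$ vertices, or its triangles pack poorly.
\item The bound $\Theta(n^3)$ can only be achieved through triangles of $L$. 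A packing lemma then shows that $L$ contains essentially one triangle $\{a_1,a_2,a_3\}$, and that its common neighbourhood $B$ has size $n-o(n)$.
\end{enumerate}

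\textbf{Upper bound, exact structure.} Here I use a defect-absorption argument.
\begin{enumerate}
\item Every vertex outside $A\cup B$ can be shown to lie in few $6$-cycles, namely $O(n^2)$ minus a deficit. Moving it into $B$, by joining it to $A$, would gain about $\binom{n}{2}$ cycles. This forces such vertices not to exist, so $B=V\setminus A$.
\item Now consider the edges inside $B$. Two disjoint $B$-edges $xy$, $zw$ give the $8$-cycle $x y a_1 z w a_2 u a_3 x$ with $u\in B$.
\item A path $xyz$ in $B$ gives the $8$-cycle $xyz a_1 u a_2 v a_3$.
\item Hence $G[B]$ has at most one edge.
\item Edges inside $A$ only add cycles. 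Comparing the counts, the extremal case requires exactly one $B$-edge and all of $A$ complete, which is $H_n$.
\end{enumerate}

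\textbf{Main obstacle.} I expect the stability step to be the hardest part. The difficulty is proving that near-extremality forces a single dominant codegree triangle with an almost spanning common neighbourhood, while keeping error terms below the $12(n-5)$ second-order term. Stability itself yields $o(n^3)$ errors, so the exact step must redo the counting with linear precision. I would attempt this by a vertex-by-vertex deletion argument: show that $G$ with minimal counterexample properties has minimum $C_6$-degree close to $3\binom{n}{2}$, and reduce to the structured case.
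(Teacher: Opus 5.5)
\textbf{There is a genuine gap in the stability step.} Your overall plan matches the paper's: the construction, a graph $L$ of pairs with linear codegree whose triangles pack, a dominant triangle with almost spanning common neighbourhood, absorption of defect vertices, and the final count with $e(G[B])\le 1$. But the stability step, which is most of the actual proof, is only asserted, and the inequality you offer cannot produce the leading constant. The bound $\#C_6\le\sum_{\{u,v,w\}}d(u,v)d(v,w)d(w,u)$ loses a constant factor, not a lower-order term. In $K_{3,n-3}$ the triples inside the large side contain no pair of large codegree, yet they contribute $27\binom{n-3}{3}\approx 4.5n^3$. So ``$\Theta(n^3)$ can only be achieved through triangles of $L$'' is false for this sum.

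The claim becomes true only after two things are done. First, each $C_6$ must be assigned to \emph{one} of its two alternating triples. The paper charges it to the triple of larger average codegree, so the charged count at $S=\{u,v,w\}$ is still at most $d(u,v)d(v,w)d(w,u)$. Second, one must prove that cycles charged to triples with at most two pairs in $L$ number $O(\rho n^3)+o(n^3)$. That needs three inputs:
\begin{itemize}
\item $\sum_{u<v}d(u,v)^2=O(n^2)$, which comes from the Gerbner--Gy\H{o}ri--Methuku--Vizer bound $O(n^2)$ on $C_4$'s. This is what makes $L$ a $C_4$-free graph with $O_\rho(1)$ edges. No local $C_8$ construction as in your step~2 gives it: two disjoint copies of $K_{2,t}$ with $t$ linear are $C_8$-free and have disjoint pairs of linear codegree.
\item The linear budget $\sum_{e\in E(L)}d(e)\le 3n+O(1)$, which handles triples with one or two pairs in $L$.
\item For charged triples with no pair in $L$, a dyadic count of triples whose codegrees all lie in $[K,3\rho n)$, together with an $O(n^{5/2})$ bound on cycles in which both alternating triples contain a pair of codegree below $K$ (via Erd\H{o}s--Gallai applied to the completions).
\end{itemize}
Only then do the packing estimate $\sum_T|P_T|\le n+O(1)$ over triangles $T$ of $L$ and the inequality $\sum_i\mu_i^3\le\mu_{\max}^2\sum_i\mu_i$ give the dominant triangle. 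Your ``main obstacle'' paragraph also misplaces the precision. Stability needs only an $o(n^3)$ error, because each defect vertex costs order $m^2$ cycles. The $12(n-5)$ term matters only in the final count, which is exact. Also, a typical vertex of $H_n$ lies in about $3n^2$ copies of $C_6$, not $3\binom n2$.

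\textbf{The absorption step is viable, but its key inputs are missing.} Make a defect vertex $v$ adjacent exactly to $A$. This keeps the graph $C_8$-free: a $C_8$ through $v$ would contain a $6$-edge path between two vertices of $A$, and replacing $v$ by a vertex of $B$ off that path gives a $C_8$ in $G$. Afterwards $v$ lies in at least $6\binom m2\approx 3m^2$ copies, where $m=|B|$. So ``few $6$-cycles'' must mean about $m^2$. This rests on a fact you never state: $v$ has at most one neighbour in $B$, since otherwise $v$-$u_1$-$a_1$-$u_3$-$a_2$-$u_4$-$a_3$-$u_2$-$v$ is a $C_8$. Together with $v$ missing some $a_i$, this bounds the cycles through $v$ that otherwise stay in $A\cup B$ by $m(m-1)+O(m)$.

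Cycles through several defect vertices must also be controlled, and no uniform per-vertex bound exists. If the $q$ defect vertices span a $K_{3,q-3}$, each of its three hubs lies in order $q^3\gg m^2$ such cycles when $q$ is linear. You must either move a defect vertex lying in the fewest copies, or compare globally as the paper does. Either way you need the total bound $O(q^2m+q^3)$ on such cycles, set against a gap of about $2qm^2$ between $R(n)$ and the main terms. The paper gets this total by showing that such a cycle meets at most one vertex of $B$ whose neighbourhood is exactly $A$; everything else lives on $O(q)$ vertices.

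\textbf{A smaller error in the lower bound.} Your claim that a cycle using an edge $a_ia_j$ needs four vertices of $B$ is wrong: $a_1$-$a_2$-$x$-$y$-$a_3$-$z$-$a_1$ uses three. The clean count is the parity identity $t=3+(k-\ell)/2$. Here $t$ is the number of $B$-vertices on the cycle, and $k,\ell$ are the numbers of cycle-edges inside $B$ and inside $A$. This forces $k=\ell=1$ for the extra cycles and gives $3\cdot 4\cdot(n-5)$.
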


Let $\N(F,H)$ denote the number of $F$-copies in $H$. It is clear that $H_n$ is $C_8$-free and 
 a direct count gives
\[
 \N(C_6,H_n)=6\binom{n-3}{3}+12(n-5).
\]

The rest of the paper is organized as follows. Section~2 develops the codegree decomposition, bounds the contributions of triples with too few large-codegree pairs, proves the packing estimate for fat triangles, and derives the stability statement giving an almost spanning common neighborhood of three vertices. Section~3 removes the remaining exceptional vertices by a defect-absorption argument and then performs the terminal count, completing the proof of Theorem~\ref{thm:main}.

\section{The codegree decomposition}

Throughout this section, \(G\) is an \(n\)-vertex \(C_8\)-free graph. For distinct vertices \(u,v\), write
\[
  N(u,v):=N(u)\cap N(v),\qquad d(u,v):=|N(u,v)|.
\]
We let \(\N(C_\ell,G)\) denote the number of unoriented copies of \(C_\ell\) in \(G\), with additional
chords ignored. For an unordered pair \(e=uv\), write \(d(e):=d(u,v)\). Constants implicit in
\(O(\cdot)\) may depend on displayed subscripts.

The following facts will be used repeatedly. The first is the even-cycle theorem of Bondy and
Simonovits.

\begin{theorem}\label{thm:bondy-simonovits}~\cite{BondySimonovits1974}
For every integer \(k\ge2\), every \(n\)-vertex \(C_{2k}\)-free graph has \(O_k(n^{1+1/k})\)
edges. In particular, every \(n\)-vertex \(C_8\)-free graph has \(O(n^{5/4})\) edges.
\end{theorem}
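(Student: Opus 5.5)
The statement is the Bondy--Simonovits even-cycle theorem, which the paper cites and does not reprove. Still, here is the argument I would follow for the case \(k=4\), \(2k=8\), which is the case we need.

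\textbf{Step 1: reductions.} Suppose \(G\) has \(e(G)\ge Cn^{5/4}\) edges, with \(C\) large, so the average degree is \(d\ge 2Cn^{1/4}\). Repeatedly deleting vertices of degree below \(d/4\) leaves a nonempty subgraph of minimum degree at least \(d/4\). Keeping a maximum cut of this subgraph then gives a bipartite subgraph \(G'\) with minimum degree \(\delta\ge d/8\). It therefore suffices to find a \(C_8\) in a bipartite graph with \(\delta\ge cn^{1/4}\) for a large constant \(c\).

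\textbf{Step 2: layers.} Fix a root \(x\) and let \(L_i\) be the set of vertices at distance \(i\) from \(x\), for \(i\le 4\). The aim is an expansion inequality of the form \(|L_{i+1}|\ge \tfrac{\delta}{C'}\,|L_i|\) for \(i=0,1,2,3\). Iterating it gives \(|L_4|\ge (\delta/C')^4>n\) once \(c\) is large, which is a contradiction.

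\textbf{Step 3: the expansion step (main obstacle).} Since \(G'\) is bipartite, every layer is independent. Each vertex of \(L_i\) therefore sends most of its \(\ge\delta\) edges either forward to \(L_{i+1}\) or back to \(L_{i-1}\), and the backward edges are few while \(|L_{i-1}|\) is small relative to \(\delta|L_i|\). Consequently the bipartite graph \(G'[L_i,L_{i+1}]\) has average degree of order \(\delta\) on the \(L_i\) side. If \(|L_{i+1}|\) were small, the graph \(G'[L_i\cup L_{i+1}]\) would be dense, and in particular would contain a long path or a dense subgraph.

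The key lemma I would invoke is the Bondy--Simonovits lemma. It says that if a graph \(H\) is colored red and blue so that it contains a nontrivial red--blue coloring, and \(H\) has enough edges relative to its vertex count (average degree \(\ge 2k\) or so), then \(H\) contains paths of every length up to \(2k\) between suitable vertices. I would take \(H\) to be the dense piece between consecutive layers, and split the vertices of \(L_i\) into two classes according to which child of \(x\) their BFS-tree path descends from. This yields a path in \(H\) of the right length \(2k-2i\) joining two vertices of \(L_i\) whose tree paths to \(x\) meet only at \(x\). Closing the path with these two tree paths, each of length \(i\), gives a cycle of length exactly \(2k=8\), contradicting \(C_8\)-freeness.

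Verifying the disjointness and getting the length arithmetic exactly right is the delicate part, and is where Bondy--Simonovits' argument, or the Pikhurko/Faudree--Simonovits variants, does its real work. It is this lemma that forces each layer to expand by a factor of order \(\delta\).

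\textbf{Step 4: general \(k\).} The same scheme works for arbitrary \(k\): take \(k\) layers, obtain expansion by a factor \(\delta/C_k\) at each layer, and conclude \(|L_k|\ge(\delta/C_k)^k\), which exceeds \(n\) unless \(\delta=O_k(n^{1/k})\). This gives \(e(G)=O_k(n^{1+1/k})\).
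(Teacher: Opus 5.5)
The paper gives no proof of this statement. It quotes it from Bondy and Simonovits and uses it only as a black box, so your sketch has to stand on its own. Your Steps 1, 2 and 4 are the standard skeleton. Step 3, the only place where $C_{2k}$-freeness is actually used, has a genuine gap, for two reasons.

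First, the lemma as you paraphrase it is false. A nontrivially $2$-coloured graph of huge average degree need not contain a single red--blue path: take two disjoint copies of $K_{t,t}$, one red and one blue. The Bondy--Simonovits lemma concerns one cycle $C$ with a chord on which the colouring is nontrivial. Such a graph has red--blue paths of every length less than $|C|$, unless it is bipartite and the colouring is its bipartition.

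Second, colouring $L_i$ by the child of the root $x$ through which the tree path passes can be constant on the dense piece $H$. Already for $i=2$, every vertex of $V(H)\cap L_2$ may have the same parent in $L_1$. Then there is no red--blue path in $H$ at all, of length $2k-2i$ or any other, and nothing in your argument rules this out.

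\medskip

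The repair is to reverse the order of choices and let the branching vertex move.
\begin{itemize}
\item For each $1\le i<k$, prove the sparsity bound $e(G'[L_i\cup L_{i+1}])\le C_k(|L_i|+|L_{i+1}|)$. If it fails, there is a subgraph of minimum degree at least $2k$. A longest path in that subgraph gives a cycle $C$ of length greater than $2k$ with a chord.
\item Only now let $y\in L_j$, with $j<i$, be the deepest common ancestor in the BFS tree of $V(C)\cap L_i$.
\item Colour red the vertices of $V(C)\cap L_i$ lying below one child of $y$ that has such descendants, and colour every other vertex of $C$ blue. This colouring is not the bipartition of the theta graph.
\item The lemma therefore gives a red--blue path of the even length $2k-2(i-j)\in[2,2k-2]$, which is less than $|C|$.
\item Because the path has even length and starts in $L_i$, it ends in $L_i$, in a different branch at $y$. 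The two tree paths to $y$ each have length $i-j$, are internally disjoint, and meet $C$ only at their ends. Together with the path they form a $C_{2k}$.
\end{itemize}
This is why the lemma must supply all lengths, not the single value $2k-2i$.

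Relatedly, your claim that the backward edges are few because $|L_{i-1}|$ is small is unjustified, since degrees are not bounded above. The expansion $|L_{i+1}|\ge(\delta/C_k-3)|L_i|$ should instead be derived inductively from $\delta|L_i|\le e(L_{i-1},L_i)+e(L_i,L_{i+1})$, using the sparsity bound at levels $i-1$ and $i$ together with $|L_{i-1}|\le|L_i|$.
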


The second is due to Gerbner, Gy\H{o}ri, Methuku and Vizer.

\begin{theorem}\label{thm:ggmv-c4}~\cite{GerbnerGyoriMethukuVizer2020}
Every \(n\)-vertex \(C_8\)-free graph contains \(O(n^2)\) copies of \(C_4\).
\end{theorem}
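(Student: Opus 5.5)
Since a copy of \(C_4\) is determined by its two diagonal pairs, I would double count through codegrees:
\[
  \N(C_4,G)=\tfrac12\sum_{\{u,v\}}\binom{d(u,v)}{2}.
\]
It therefore suffices to show \(\sum_{\{u,v\}}\binom{d(u,v)}{2}=O(n^2)\). I would fix an absolute constant \(K\) (say \(K=8\)) and call a pair \(uv\) \emph{heavy} if \(d(u,v)\ge K\) and \emph{light} otherwise. I would then treat copies of \(C_4\) with at least one heavy diagonal separately from those whose two diagonals are both light.

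For heavy pairs, let \(\mathcal H\) be the auxiliary graph of heavy pairs. The first step is to show that \(\mathcal H\) is \(C_4\)-free. If \(x_1x_2x_3x_4\) were a 4-cycle in \(\mathcal H\), one could choose greedily distinct vertices \(y_i\in N(x_i,x_{i+1})\) outside \(\{x_1,\dots,x_4\}\). Since each codegree is at least \(K\ge 8\), this greedy choice succeeds, and it yields a copy of \(C_8\) in \(G\).

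The second step is a local restriction. For every vertex \(w\), the graph \(\mathcal H[N(w)]\) contains no path with three edges \(u_1u_2u_3u_4\). Indeed, such a path would give the \(C_8\)
\[
  w\,u_1\,y_1\,u_2\,y_2\,u_3\,y_3\,u_4\,w,
\]
where the \(y_i\) are again chosen greedily from the large common neighborhoods. Hence \(\mathcal H[N(w)]\) is a disjoint union of stars and triangles, so \(e(\mathcal H[N(w)])\le d(w)\). Summing over \(w\) and using Theorem~\ref{thm:bondy-simonovits}, I would obtain
\[
  \sum_{uv\in \mathcal H}d(u,v)=\sum_w e(\mathcal H[N(w)])\le 2e(G)=O(n^{5/4}).
\]

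To upgrade this first-moment bound to the needed bound on \(\sum_{uv\in\mathcal H}d(u,v)^2\), I would exploit the star structure further. In \(\mathcal H[N(w)]\), each star center \(u\) has heavy partners \(v_1,\dots,v_s\), all inside \(N(w)\). I would show that two heavy pairs \(uv\), \(uv'\) with \(v\ne v'\) cannot both have common neighborhoods that overlap heavily with another heavy star, since otherwise a \(C_8\) can be routed through \(u\). The intended consequence is that, for each fixed \(u\), the sets \(N(u,v)\), \(v\in N_{\mathcal H}(u)\), are nearly disjoint. Granting this, \(\sum_{v}d(u,v)\le d(u)+O(1)\cdot|N_{\mathcal H}(u)|\). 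Combined with \(d(u,v)\le n\), this gives
\[
  \sum_{uv\in\mathcal H}d(u,v)^2\le n\sum_u\bigl(d(u)+O(|N_{\mathcal H}(u)|)\bigr)=O(n\cdot e(G))+O(n\cdot e(\mathcal H)).
\]
This is still too weak by itself. I would therefore replace the factor \(n\) by the bound \(\binom{d(u,v)}2\le d(u,v)\cdot d(u)\) and sum \(d(u)^2\)-type quantities over star centers only.

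The remaining case is copies of \(C_4\) whose two diagonals are both light. Each such copy is determined by a path \(u a v\) with \(uv\) light, together with one of at most \(K\) choices of the fourth vertex. I expect this light part to be the main obstacle. The trivial bound \(\sum_a\binom{d(a)}{2}\) can be as large as \(n^{9/4}\), so some genuinely \(C_8\)-specific argument is needed. Here I would first try a degree split. Vertices of degree at most \(\sqrt n\) contribute \(\sum_a d(a)^2\le\sqrt n\cdot 2e(G)=O(n^{7/4})\). For a high-degree vertex \(a\), I would count light 4-cycles through \(a\) by applying a Bondy--Simonovits type path-counting argument inside \(N(a)\cup N_2(a)\), using that \(C_8\)-freeness forbids long paths alternating between \(N(a)\) and the second neighborhood. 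The number of vertices of degree above \(\sqrt n\) is \(O(n^{3/4})\), and I would aim to show that each contributes \(O(n^{5/4})\) light \(C_4\)'s, which gives \(O(n^2)\) in total.
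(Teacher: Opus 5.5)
You have the difficulty backwards, and the part that genuinely needs an argument is left unproved.

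The light part is trivial. If $d(u,v)<K$ then $\binom{d(u,v)}{2}<\binom{K}{2}$, so light diagonals contribute at most $\binom{K}{2}\binom{n}{2}$ to $\sum\binom{d(u,v)}{2}$. Equivalently, the paths $u\,a\,v$ with $uv$ light number $\sum_{uv\text{ light}}d(u,v)<K\binom{n}{2}$. Your $n^{9/4}$ worry comes from counting cherries by their centre rather than by their endpoint pair. In fact your own bounds already give $\sum_a\binom{d(a)}{2}=\sum_{\{x,y\}}d(x,y)< K\binom{n}{2}+2e(G)=O(n^2)$. So the degree-split programme for high-degree vertices, which you do not carry out anyway, is unnecessary.

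The real content is $\sum_{uv\in\mathcal H}\binom{d(u,v)}{2}=O(n^2)$, and there you only reach the first moment $\sum_{uv\in\mathcal H}d(u,v)\le 2e(G)$, which is correct but too weak. The asserted near-disjointness of the sets $N(u,v)$, $v\in N_{\mathcal H}(u)$, is false as stated. In $K_{3,n-3}$ with small side $\{a,b,c\}$ one has $N(a,b)=N(a,c)$, so $\sum_{v}d(a,v)=2d(a)$, not $d(a)+O(1)$. The closing step (replacing $n$ by $d(u)$ and summing $d(u)^2$) would also require a proven inequality $\sum_{v\in N_{\mathcal H}(u)}d(u,v)\le C\,d(u)$, which you do not have.

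The missing idea is to charge the heavy contribution to the other diagonal. Since $x,y\in N(u,v)$ if and only if $u,v\in N(x,y)$,
\[
  \sum_{uv\in\mathcal H}\binom{d(u,v)}{2}=\sum_{\{x,y\}}e\bigl(\mathcal H[N(x,y)]\bigr),
\]
and each summand is bounded by an absolute constant:

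\textbf{Disjoint pairs.} Two disjoint heavy pairs $u_1u_2,u_3u_4$ inside $N(x,y)$ give the $C_8$ $x\,u_1\,z_1\,u_2\,y\,u_3\,z_2\,u_4\,x$, with $z_1\in N(u_1,u_2)$ and $z_2\in N(u_3,u_4)$.

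\textbf{Heavy paths.} If $d(x,y)\ge6$, a heavy path $u_1u_2u_3$ inside $N(x,y)$ gives the $C_8$ $x\,u_1\,z_1\,u_2\,z_2\,u_3\,y\,w\,x$, with $z_1\in N(u_1,u_2)$, $z_2\in N(u_2,u_3)$ and $w\in N(x,y)$.

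In both cases the new vertices are chosen greedily to be distinct from those already named, which uses $K\ge8$ and $d(x,y)\ge6$. Hence $e(\mathcal H[N(x,y)])\le1$ when $d(x,y)\ge6$, and $e(\mathcal H[N(x,y)])\le\binom{5}{2}=10$ otherwise. So $\sum_{uv\in\mathcal H}\binom{d(u,v)}{2}\le10\binom{n}{2}$. Together with the trivial light bound this proves the statement, without even using Theorem~\ref{thm:bondy-simonovits}. (The paper itself gives no proof here; it quotes the result from Gerbner, Gy\H{o}ri, Methuku and Vizer.)
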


\begin{lemma}\label{lem:basic}
  Every \(n\)-vertex \(C_8\)-free graph satisfies
  \[
    e(G)=O(n^{5/4}),\qquad
    \sum_{u<v}d(u,v)^2=O(n^2),\qquad
    \N(C_6,G)=O(n^3).
  \]
  For \(L>0\), let
  \[
    \tilde{G}_L:=\{uv:d(u,v)\ge L\}.
  \]
  There is an absolute \(K_0\) such that \(\tilde{G}_K\) is \(C_4\)-free for \(K\ge K_0\). For every fixed
  \(\rho>0\), the graph \(\tilde{G}_{\rho n}\) is \(C_4\)-free for all sufficiently large \(n\), and
  \[
    e(\tilde{G}_{\rho n})=O_\rho(1).
  \]
  Finally, four fixed distinct vertices have at most three common neighbors.
\end{lemma}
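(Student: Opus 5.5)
The lemma collects several estimates; I would prove them in the order stated, deriving each from Theorems~\ref{thm:bondy-simonovits} and~\ref{thm:ggmv-c4} plus short cycle-building arguments.

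The bound \(e(G)=O(n^{5/4})\) is Theorem~\ref{thm:bondy-simonovits}. For the codegree sum, note that \(\sum_{u<v}\binom{d(u,v)}{2}\) counts each \(C_4\) exactly twice, once for each diagonal pair. By Theorem~\ref{thm:ggmv-c4} this sum is \(O(n^2)\). Since \(d^2\le 2\binom d2+d\) and
\[
\sum_{u<v} d(u,v)=\sum_w\binom{d(w)}2\le n\sum_w d(w)=O(n^{9/4}),
\]
the linear term is too large for a direct bound. Instead I would split pairs into those with \(d(u,v)\ge2\), where \(d\le 2\binom d2\), and those with \(d(u,v)\le 1\). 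For the latter the contribution is at most \(\binom n2\). Hence \(\sum d(u,v)^2\le 4\sum\binom{d}{2}+\binom n2=O(n^2)\).

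For \(\N(C_6,G)\), write a \(6\)-cycle as \(v_1\dots v_6\) and choose the three alternate vertices \(v_1,v_3,v_5\). The cycle is determined by this triple together with a choice of \(v_2\in N(v_1,v_3)\), \(v_4\in N(v_3,v_5)\), \(v_6\in N(v_5,v_1)\). So
\[
\N(C_6,G)\le\sum_{a,b,c}d(a,b)\,d(b,c)\,d(c,a).
\]
By AM--GM, \(xyz\le(x^3+y^3+z^3)/3\), and for each pair \(ab\) there are at most \(n\) choices of the third vertex. Thus the sum is \(O\bigl(n\sum_{e}d(e)^3\bigr)\). This is too weak, so instead I use Cauchy--Schwarz in the form
\[
\sum_{a,b,c}d(a,b)d(b,c)d(c,a)\le\sum_{b}\sum_{a,c}d(a,b)d(b,c)d(a,c).
\]
Bounding \(d(a,c)\le n\) would give \(n\sum_b(\sum_a d(a,b))^2\), again too weak. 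The clean route is to treat the sum as a trace, \(\mathrm{tr}(D^3)\le\|D\|_F^3\) with \(D=(d(u,v))\), giving \(\bigl(\sum d^2\bigr)^{3/2}=O(n^3)\). This uses symmetric \(D\) and \(|\mathrm{tr}D^3|\le\sum|\lambda_i|^3\le(\sum\lambda_i^2)^{3/2}\).

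For the statements about \(\tilde G_L\), suppose \(\tilde G_K\) contains a \(4\)-cycle \(x_1x_2x_3x_4\). Each consecutive pair has at least \(K\) common neighbours. I would greedily pick distinct \(y_i\in N(x_i,x_{i+1})\setminus\{x_1,\dots,x_4,y_1,\dots,y_{i-1}\}\), which is possible once \(K\ge 8\). Then \(x_1y_1x_2y_2x_3y_3x_4y_4\) is a \(C_8\) in \(G\), so \(K_0=8\) suffices. Since \(\rho n\ge K_0\) for large \(n\), \(\tilde G_{\rho n}\) is then also \(C_4\)-free. For the edge count, \(\sum_{e\in\tilde G_{\rho n}}d(e)^2\ge e(\tilde G_{\rho n})\rho^2n^2\), while the left side is \(O(n^2)\) by the second estimate. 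Hence \(e(\tilde G_{\rho n})=O(\rho^{-2})\).

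Finally, suppose four vertices \(a,b,c,d\) have four common neighbours \(w_1,\dots,w_4\). Then \(a w_1 b w_2 c w_3 d w_4 a\) is an \(8\)-cycle, a contradiction, so they have at most three common neighbours. The only delicate step is the \(C_6\) count. My first attempt is the trace argument above, with the direct \(O(n^3)\) bound from \(\ex(n,C_6,C_8)=\Theta(n^3)\) in~\cite{GerbnerGyoriMethukuVizer2020} as a fallback.
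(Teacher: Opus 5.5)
Your proposal is correct and follows essentially the same route as the paper. The codegree bound is $\sum_{u<v}d(u,v)^2\le 4\sum_{u<v}\binom{d(u,v)}{2}+\binom n2=O(n^2)$, the $C_6$ bound is the trace estimate $\operatorname{tr}(D^3)\le(\sum_{u,v}D_{uv}^2)^{3/2}$, and the statements about $\tilde{G}_K$ ($K_0=8$), $e(\tilde{G}_{\rho n})=O(\rho^{-2})$ and the four common neighbours use the paper's own greedy $C_8$ constructions and averaging; the only detail to make explicit is that $D$ should have zero diagonal (as the paper's $B$ does), so that the sum over distinct $a,b,c$ is dominated termwise by $\operatorname{tr}(D^3)$, and the exploratory dead ends before the trace argument can simply be dropped.
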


\begin{proof}
  By Theorems~\ref{thm:bondy-simonovits} and~\ref{thm:ggmv-c4}, we have
  \(e(G)=O(n^{5/4})\) and \(\N(C_4,G)=O(n^2)\). Hence
  \[
    \sum_{u<v}\binom{d(u,v)}{2}=2\N(C_4,G)=O(n^2).
  \]
  For every integer \(d\ge0\), $d^2\le4\binom d2+\ind_{\{d=1\}}$.
  Consequently,
  \begin{align}\label{eq:sum-codegree-O(n2)}
      \sum_{u<v}d(u,v)^2
    \le4\sum_{u<v}\binom{d(u,v)}2+\binom n2
    =O(n^2).
  \end{align}

  Let \(B\) be the symmetric matrix with \(B_{uv}=d(u,v)\) for \(u\ne v\) and zero diagonal. The sum
    $\sum_{a,b,c}B_{ab}B_{bc}B_{ca}$ counts ordered sextuples \((a,x,b,y,c,z)\) such that $x\in N(a,b),y\in N(b,c),z\in N(c,a)$, 
  or equivalently alternating closed walks $a$-$x$-$b$-$y$-$c$-$z$-$a$. Each unoriented copy of \(C_6\) gives
  exactly \(6\cdot2=12\) such sextuples, one for each starting vertex and direction around the cycle.
  All remaining sextuples only add nonnegative terms, possibly degenerate. Hence
  \[
    12\N(C_6,G)\le \sum_{a,b,c}B_{ab}B_{bc}B_{ca}=\operatorname{tr}(B^3).
  \]
  Let
  \(\lambda_1,\ldots,\lambda_n\) be the eigenvalues of \(B\). Since \(B\) is symmetric,
  \[
    \operatorname{tr}(B^3)=\sum_i\lambda_i^3
    \le\sum_i|\lambda_i|^3
    \le\left(\sum_i\lambda_i^2\right)^{3/2}
    =\left(\sum_{u,v}B_{uv}^2\right)^{3/2}
    =O(n^3).
  \]

  Suppose first that \(v_1v_2v_3v_4v_1\) is a 4-cycle in \(\tilde{G}_K\). If \(K\ge8\), we may choose
  the vertices \(w_i\in N(v_i,v_{i+1})\) successively so that
  \[
    w_i\notin\{v_1,v_2,v_3,v_4,w_1,\ldots,w_{i-1}\}.
  \]
  Indeed, when \(w_i\) is chosen, at most seven vertices are forbidden and
  \(|N(v_i,v_{i+1})|\ge K\ge8\). They form the cycle
  $v_1$-$w_1$-$v_2$-$w_2$-$v_3$-$w_3$-$v_4$-$w_4$-$v_1$,
  a contradiction. Thus \(\tilde{G}_K\) is \(C_4\)-free whenever \(K\ge8\), so one may take \(K_0=8\).
  The same greedy choice applies to \(\tilde{G}_{\rho n}\) once \(n\) is sufficiently large in terms of \(\rho\).
  By~\eqref{eq:sum-codegree-O(n2)}, we have
  \begin{align*}
    e(\tilde{G}_{\rho n})\rho^2n^2
    \le\sum_{uv\in E(\tilde{G}_{\rho n})}d(u,v)^2
    \le\sum_{u<v}d(u,v)^2
    =O(n^2),
  \end{align*}
  and thus \(e(\tilde{G}_{\rho n})=O_\rho(1)\).
  For the last assertion, if distinct vertices \(x_1,x_2,x_3,x_4\) had distinct common neighbors
  \(y_1,y_2,y_3,y_4\), then $x_1$-$y_1$-$x_2$-$y_2$-$x_3$-$y_3$-$x_4$-$y_4$-$x_1$ would be a \(C_8\).
\end{proof}

Every copy $C=a$-$x$-$b$-$y$-$c$-$z$-$a$ has alternating triples \(T=\{a,b,c\}\) and \(W=\{x,y,z\}\). For a triple
\(S=\{u,v,w\}\), put
\[
  \mu(S):=\frac{d(u,v)+d(v,w)+d(w,u)}{3}.
\]
Fix an arbitrary total order on \(V(G)\), and compare triples lexicographically after listing their
vertices in increasing order.
For any such \(C\), charge \(C\) to \(\{a,b,c\}\) if
\(\mu(\{a,b,c\})>\mu(\{x,y,z\})\), and to \(\{x,y,z\}\) if
\(\mu(\{x,y,z\})>\mu(\{a,b,c\})\). In the case of equality, charge \(C\) to the lexicographically
smaller of the two triples. For any triple \(S\), define
\[
  F^{\#}(S):=
  |\{H\subseteq G:H\cong C_6\text{ and }H\text{ is charged to }S\}|.
\]
Independent choices of the three intermediate vertices and the arithmetic-geometric mean inequality give
\begin{equation}\label{eq:charge}
  F^{\#}(S)\le d(u,v)d(v,w)d(w,u)\le\mu(S)^3.
\end{equation}
Fix \(0<\rho<1\) and write \(\tilde{G}:=\tilde{G}_{\rho n}\). A triple is called \(i\)-fat if exactly \(i\) of its three pairs are edges of \(\tilde{G}\).

\subsection{Fat triangles}

A 3-fat triple is a triangle of \(\tilde{G}\). Write \(\triangle(\tilde{G})\) for the set of triangles of \(\tilde{G}\). For
\(T=\{a,b,c\}\in\triangle(\tilde{G})\), define
\[
  P_T:=N_G(a,b)\cup N_G(b,c)\cup N_G(c,a).
\]

\begin{lemma}\label{lem:packing}
For all sufficiently large \(n\),
\[
  \sum_{T\in\triangle(\tilde{G})}|P_T|\le n+O_\rho(1).
\]
\end{lemma}

\begin{proof}
By Lemma~\ref{lem:basic}, \(\tilde{G}\) is \(C_4\)-free and has \(O_\rho(1)\) edges. Thus it has
\(O_\rho(1)\) triangles, and two of them cannot share an edge. Let 
\[
  V^*(\tilde{G})\coloneqq \{v\in V(G):d_{\tilde{G}}(v)\ge1\}\quad\text{ and }\quad P_T^\circ:=P_T\setminus V^*(\tilde{G}).
\]
Since there are \(O_\rho(1)\) triangles in \(\tilde{G}\), it is enough to show that
\(|P_T^\circ\cap P_{T'}^\circ|=O(1)\) for distinct triangles \(T,T'\).

Take \(x\in P_T^\circ\cap P_{T'}^\circ\). Then \(x\) lies in \(N(f)\cap N(f')\) for some pair
\(f\subset T\) and some pair \(f'\subset T'\). If \(f\) and \(f'\) are disjoint, their four endpoints
have \(x\) as a common neighbor, so there are at most three possibilities for \(x\).

Suppose next that \(f\) and \(f'\) meet. Relabel so that
\[
  T=\{a,b,c\},\qquad T'=\{a,d,h\},\qquad f=ab,\qquad f'=ad.
\]
There is at most one vertex in \(N(a,b)\cap N(a,d)\setminus V^*(\tilde{G})\). Otherwise take two such
vertices \(x_1,x_2\). Since \(dh,ha\in E(\tilde{G})\), the sets \(N_G(d,h)\) and \(N_G(h,a)\)
each have size at least \(\rho n\). For sufficiently large \(n\), choose
\[
  y\in N_G(d,h),\qquad z\in N_G(h,a)
\]
outside \(\{a,b,c,d,h,x_1,x_2\}\) and with \(y\ne z\).
Then $b$-$x_1$-$d$-$y$-$h$-$z$-$a$-$x_2$-$b$ is a \(C_8\). Since there are only nine choices for \((f,f')\), the required intersection bound follows.

Finally,
\[
  \sum_T|P_T^\circ|
  \le\left|\bigcup_TP_T^\circ\right|
     +\sum_{T<T'}|P_T^\circ\cap P_{T'}^\circ|
  \le n+O_\rho(1).
\]
Indeed, an element lying in \(k\) of the sets contributes \(k\) to the left-hand side and
\(1+\binom{k}{2}\ge k \) to the first two terms on the right. Moreover,
\(\left|\bigcup_TP_T^\circ\right|\le n\), and the pairwise intersections contribute \(O_\rho(1)\)
in total because \(\tilde{G}\) has \(O_\rho(1)\) triangles. This proves the displayed bound.
\end{proof}

Since \(\mu(T)\le|P_T|\), equation~\eqref{eq:charge} and Lemma~\ref{lem:packing} give
\begin{equation}\label{eq:fat-triangles}
  \sum_{T\text{ 3-fat}}F^{\#}(T)
  \le\sum_T\mu(T)^3
  \le\left(\sum_T|P_T|\right)^3
  \le n^3+O_\rho(n^2).
\end{equation}

\subsection{One or two fat pairs}

We shall use the following linear budget for the edges of \(\tilde{G}\).

\begin{lemma}\label{lem:linear-budget}
For every fixed \(\rho>0\),
\[
  \sum_{e\in E(\tilde{G})}d_G(e)\le3n+O_\rho(1).
\]
\end{lemma}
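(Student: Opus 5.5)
The plan is to double count. For each vertex $x\in V(G)$ let $F_x$ be the subgraph of $\tilde G$ formed by the edges $e\in E(\tilde G)$ with $x\in N_G(e)$; equivalently $F_x=\tilde G[N_G(x)]$. Then
\[
  \sum_{e\in E(\tilde G)}d_G(e)=\sum_{x\in V(G)}e(F_x).
\]
By Lemma~\ref{lem:basic}, $e(\tilde G)=O_\rho(1)$. Hence every $x$ contributes $O_\rho(1)$, and it suffices to show that $e(F_x)\le 3$ for all but $O_\rho(1)$ vertices $x$. I will first discard the $O_\rho(1)$ vertices of $V^*(\tilde G)$, and then show that every remaining $x$ with $e(F_x)\ge4$ belongs to one of $O_\rho(1)$ exceptional sets, each of size $O(1)$. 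In every embedding below, all edges of $\tilde G$ have codegree at least $\rho n$. So, exactly as in the proofs of Lemmas~\ref{lem:basic} and~\ref{lem:packing}, the intermediate vertices $w_i$ can be chosen greedily, distinct from each other and from the $O(1)$ named vertices, once $n$ is large.

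\emph{Step 1: $F_x$ contains no path with three edges.} Suppose $u_1u_2u_3u_4$ is such a path in $F_x$. Choose $w_i\in N_G(u_i,u_{i+1})$ for $i=1,2,3$. Since $x$ is adjacent to $u_1$ and $u_4$, the closed walk $x$-$u_1$-$w_1$-$u_2$-$w_2$-$u_3$-$w_3$-$u_4$-$x$ is a $C_8$, a contradiction.

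\emph{Step 2: two disjoint edges occur only for $O_\rho(1)$ vertices.} If $F_x$ contains disjoint edges $ab$ and $cd$, then $x$ is a common neighbour of the four distinct vertices $a,b,c,d$. By the last assertion of Lemma~\ref{lem:basic}, this happens for at most three $x$ per pair of disjoint edges of $\tilde G$. There are $O_\rho(1)$ such pairs, so only $O_\rho(1)$ vertices are affected.

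\emph{Step 3: stars with three edges occur only for $O_\rho(1)$ vertices.} Suppose $F_x$ and $F_{x'}$, with $x\ne x'$ both outside $V^*(\tilde G)$, contain the same star with centre $u$ and leaves $v_1,v_2,v_3$. Choose $w_2\in N_G(v_2,u)$ and $w_3\in N_G(u,v_3)$. Then $x$-$v_1$-$x'$-$v_2$-$w_2$-$u$-$w_3$-$v_3$-$x$ is a $C_8$. This uses only the adjacencies $xv_1$, $x'v_1$, $x'v_2$ and $xv_3$, which hold because $x,x'\in N(v_i)$. Hence each of the $O_\rho(1)$ three-edge stars of $\tilde G$ lies in $F_x$ for at most one non-exceptional $x$.

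\emph{Step 4: conclusion.} A graph with no path of three edges and no two disjoint edges is a star or a triangle. If in addition it has no star with three edges, it has at most $3$ edges. So $e(F_x)\le3$ for all $x$ outside an exceptional set of size $O_\rho(1)$, and each exceptional $x$ contributes at most $e(\tilde G)=O_\rho(1)$. Altogether
\[
  \sum_{e\in E(\tilde G)}d_G(e)\le 3n+O_\rho(1).
\]

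The main obstacle is Step 3. Large stars do not obviously yield an eight-cycle through a single $x$; a path through $x$ alone returns to $x$ too early. The remedy is to use two distinct vertices $x,x'$ sharing the star, which lengthens the cycle by exactly the right amount. The one point needing care is that $x,x',v_1,v_2,v_3,u$ are all distinct. This holds since $x,x'$ are neighbours of $u$ and the $v_i$, so they differ from them, and since $x\ne x'$ by assumption.
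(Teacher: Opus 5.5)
Your proof is correct, but it is more roundabout than the paper's.

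The paper's argument runs as follows.
\begin{itemize}
\item A graph with at least four edges spans at least four vertices.
\item So any $x$ with $e(F_x)\ge4$ is a common neighbour of some four-element subset of $V^*(\tilde G)$.
\item By the last assertion of Lemma~\ref{lem:basic}, each such subset has at most three common neighbours.
\item Hence there are at most $3\binom{|V^*(\tilde G)|}{4}=O_\rho(1)$ exceptional vertices.
\end{itemize}
This needs no structural analysis of $F_x$ and no fresh $C_8$ embedding.

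Your Steps~2 and~3 are both instances of that single observation. A three-edge star in $F_x$ already makes $x$ a common neighbour of the four vertices $u,v_1,v_2,v_3$. So the obstacle you flag in Step~3 does not arise, and the two-vertex $C_8$ through $x$ and $x'$ is unnecessary. Step~1 is also redundant: once two disjoint edges are excluded, $F_x$ is already a star or a triangle.

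Your route does give slightly finer information. Outside $O_\rho(1)$ vertices, $F_x$ is a triangle or a star with at most two edges. Also, each three-edge star of $\tilde G$ lies in $F_x$ for at most one $x$ outside $V^*(\tilde G)$. None of this is used later, however. The cost is that Steps~1 and~3 rely on greedy embeddings using codegrees at least $\rho n$. The paper's proof uses only $|V^*(\tilde G)|=O_\rho(1)$ and the four-vertex common-neighbour bound.
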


\begin{proof}
Let \(V^*(\tilde{G}):=\{v\in V(G):d_{\tilde{G}}(v)\ge1\}\).
For \(x\in V(G)\), let \(\tilde{G}_x:=\tilde{G}[N_G(x)]\). 
Since \(e(\tilde{G})=O_\rho(1)\), also \(|V^*(\tilde{G})|\le2e(\tilde{G})=O_\rho(1)\).

Let \(X:=\{x\in V(G):e(\tilde{G}_x)\ge4\}\). If \(x\in X\), then the four edges in \(\tilde{G}_x\) involve at
least four vertices of \(V^*(\tilde{G})\), all adjacent to \(x\) in \(G\). Thus \(x\) is a common neighbor
of some four-element subset of \(V^*(\tilde{G})\). By Lemma~\ref{lem:basic}, each such four-element
subset has at most three common neighbors. Hence
\[
  |X|\le3\binom{|V^*(\tilde{G})|}{4}=O_\rho(1).
\]
For \(x\notin X\), we have \(e(\tilde{G}_x)\le3\). For \(x\in X\), the trivial bound
\(e(\tilde{G}_x)\le e(\tilde{G})=O_\rho(1)\) applies. Therefore, counting each edge \(uv\in E(\tilde{G})\) once for every
common neighbor \(x\in N_G(u,v)\) gives
\[
  \sum_{e\in E(\tilde{G})}d_G(e)=\sum_{x\in V(G)}e(\tilde{G}_x)\le3n+|X|e(\tilde{G})=3n+O_\rho(1),
\]
which proves the claim.
\end{proof}

\begin{lemma}\label{lem:one-two-fat}
For fixed \(K\ge10\), the total contribution of 2-fat charged triples is
\[
  O(\rho n^3)+O_\rho(n^2),
\]
and the total contribution of 1-fat charged triples is
\[
  O_K(\rho n^3)+O_{K,\rho}(n^2).
\]
\end{lemma}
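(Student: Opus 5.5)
We bound each charged triple through \eqref{eq:charge}, $F^{\#}(S)\le d(u,v)d(v,w)d(w,u)$, and then sum over the fat structure.

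\textbf{2-fat triples.} Let $S=\{u,v,w\}$ be 2-fat, say with $uv,vw\in E(\tilde{G})$ and $d(u,w)<\rho n$. Then
\[
F^{\#}(S)\le d(u,v)\,d(v,w)\,d(u,w)\le \rho n\, d(u,v)\,d(v,w).
\]
So the total is at most $\rho n$ times the sum, over all pairs of $\tilde{G}$-edges sharing a vertex, of $d(e)d(e')$. This sum is at most $\bigl(\sum_{e\in E(\tilde{G})}d(e)\bigr)^2$. By Lemma~\ref{lem:linear-budget} that square is at most $(3n+O_\rho(1))^2$, so the total is $O(\rho n^3)+O_\rho(n^2)$. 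The fact that $S$ is charged plays no role here.

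\textbf{1-fat triples: reduction.} Let $S=\{u,v,w\}$ be 1-fat with fat pair $uv$, so $d(u,w),d(v,w)<\rho n$. Then
\[
F^{\#}(S)\le d(u,v)\,d(u,w)\,d(v,w).
\]
Summing over $uv\in E(\tilde{G})$ and over $w$, it suffices to show
\[
\sum_w d(u,w)\,d(v,w)\ \lesssim\ \rho n\cdot(\text{linear})+O(n),
\]
and then combine with $\sum_e d(e)\le 3n+O_\rho(1)$. The naive estimate $d(u,w)d(v,w)\le \rho n\, d(v,w)$ fails, because $\sum_w d(v,w)$ can be of order $n\cdot d(v)$, which is too large. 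So I would split according to the threshold $K$.

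\textbf{1-fat triples: the split at $K$.}
\begin{itemize}
\item If $\min(d(u,w),d(v,w))<K$, then $d(u,w)d(v,w)\le K(d(u,w)+d(v,w))$. This case needs a separate count of the relevant triples: bound $\sum_w d(u,w)$ over $w$ with small codegree by counting paths $u$-$x$-$w$.
\item If both $d(u,w),d(v,w)\ge K\ge K_0$, then $uw$ and $vw$ are edges of $\tilde{G}_K$. With $uv$ also present, they form a triangle in the graph $\tilde{G}_K\cup\tilde{G}$. For $K\ge10$ I would prove, by the same greedy $C_8$-construction as in Lemma~\ref{lem:basic}, that such triangles attached to a fat edge $uv$ are few. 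For two choices $w,w'$, pick common neighbours greedily: $u$-$x$-$w$-$y$-$v$-$z$-$w'$-$t$-$u$ is a $C_8$, which needs $d\ge 8$ plus room. So each fat edge has $O(1)$ such $w$, each contributing at most $(\rho n)^2$.
\end{itemize}

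\textbf{1-fat triples: charging for the small case.} The small case is the real obstacle, since the sum over $w$ of $d(u,w)$ restricted to small codegrees is not obviously $O(n)$. Here I would use the charging. If $S$ is charged, then $\mu(S)\ge\mu(W)$ for the opposite triple $W=\{x,y,z\}$, with $x\in N(u,v)$. Alternatively, I would use
\[
\sum_{u<v}d(u,v)^2=O(n^2)
\]
via Cauchy--Schwarz. Copies through $S$ are at most
\[
\sum_{x\in N(u,v)}\ \sum_{y,z} \mathbf 1[\ldots],
\]
and the number of $4$-paths $u$-$z$-$w$-$y$-$v$ is at most $\sum_w d(u,w)d(v,w)$. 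Summing over all $w$ for a fixed fat pair should be bounded via $C_8$-freeness: many such paths together with the $\rho n$ common neighbours of $u,v$ create long cycles. The target is an $O_K(\rho n^2)$-type count per fat edge, weighted appropriately. Summing over the $O_\rho(1)$ fat edges, weighted by $d(u,v)\le n$, then gives the claimed $O_K(\rho n^3)+O_{K,\rho}(n^2)$.
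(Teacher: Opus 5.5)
Your 2-fat argument matches the paper's. So does your handling of the vertices $w$ with $d(u,w),d(v,w)\ge K$: at most one such $w$ per fat edge, via the greedy $C_8$.

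The gap is the remaining 1-fat case $\min\{d(u,w),d(v,w)\}<K$. You call it the real obstacle and then leave it to a charging/Cauchy--Schwarz sketch that never produces a bound. The missing observation is elementary. The triple is 1-fat with fat pair $uv$, so both $d(u,w)$ and $d(v,w)$ are below $\rho n$. Hence, when the smaller one is below $K$, the product is below $K\rho n$. Summing over the at most $n$ choices of $w$ gives at most $K\rho n^2$. Adding the single exceptional $w$, which contributes at most $(\rho n)^2\le\rho n^2$, yields $\sum_w d(u,w)d(v,w)=O_K(\rho n^2)$ for each fat edge $uv$. No charging is needed.

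Your bound $d(u,w)d(v,w)\le K(d(u,w)+d(v,w))$ discards the information that both codegrees are below $\rho n$. It forces you to control the 2-path count $\sum_{w}d(u,w)=\sum_{x\in N(u)}(d(x)-1)$, and you give no argument for that. Your route can be completed. A path with six edges inside $G[N(u)]$ closes a $C_8$ through $u$. So does a path with seven edges in the bipartite graph between $N(u)$ and $V(G)\setminus(N(u)\cup\{u\})$, since such a path contains a six-edge subpath with both ends in $N(u)$. Erd\H{o}s--Gallai then gives $\sum_{x\in N(u)}(d(x)-1)\le 8n$. As written, though, this step is simply missing.

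There is a second problem in your closing sentence. You sum over ``the $O_\rho(1)$ fat edges, weighted by $d(u,v)\le n$''. The only available bound is $e(\tilde{G})=O(\rho^{-2})$. With it, the exceptional-$w$ term alone gives $\rho^{-2}\cdot n\cdot\rho^2n^2=\Theta(n^3)$, and the small case gives order $Kn^3/\rho$. Neither is $O_K(\rho n^3)$, and Lemma~\ref{lem:stability} needs exactly that form, since $\rho\to0$ with $K$ fixed. You must instead multiply the per-edge bound by $d(u,v)$ and use Lemma~\ref{lem:linear-budget}, $\sum_{e\in E(\tilde{G})}d(e)\le 3n+O_\rho(1)$. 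That is what your own reduction paragraph proposed, and it gives $O_K(\rho n^2)(3n+O_\rho(1))=O_K(\rho n^3)+O_{K,\rho}(n^2)$.
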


\begin{proof}
We first treat 2-fat triples. Let \(T=\{v_1,v_2,v_3\}\) be 2-fat, and suppose, after relabeling, that
  \(v_1v_2,v_1v_3\in E(\tilde{G})\), and \(v_2v_3\notin E(\tilde{G})\).

By the definition of \(\tilde{G}\), the last condition gives \(d(v_2,v_3)<\rho n\). Therefore, by
\eqref{eq:charge},
\[
  F^{\#}(T)
  \le d(v_1,v_2)d(v_1,v_3)d(v_2,v_3)
  \le \rho n\,d(v_1,v_2)d(v_1,v_3).
\]
For each 2-fat triple \(T\), let \(e_1(T),e_2(T)\) be its two fat pairs. Summing the previous bound
and then enlarging the sum to all ordered pairs of edges of \(\tilde{G}\), we obtain
\[
  \sum_{T\text{ 2-fat}}F^{\#}(T)
  \le \rho n\sum_{T\text{ 2-fat}}d(e_1(T))d(e_2(T))
  \le \rho n\sum_{e_1,e_2\in E(\tilde{G})}d(e_1)d(e_2)
  =\rho n\left(\sum_{e\in E(\tilde{G})}d(e)\right)^2.
\]
Lemma~\ref{lem:linear-budget} gives
\[
  \sum_{T\text{ 2-fat}}F^{\#}(T)
  \le \rho n(3n+O_\rho(1))^2
  =O(\rho n^3)+O_\rho(n^2).
\]

We next treat 1-fat triples. Fix an edge \(v_1v_2\in E(\tilde{G})\), and for
\(\omega_0\in V(G)\setminus\{v_1,v_2\}\), write
\[
  p_{\omega_0}=d(v_1,\omega_0),\qquad q_{\omega_0}=d(v_2,\omega_0).
\]
We claim that
\begin{equation}\label{eq:pq}
  \sum_{\substack{\omega_0\in V(G)\setminus\{v_1,v_2\}\\p_{\omega_0},q_{\omega_0}<\rho n}}
  p_{\omega_0}q_{\omega_0}
  =O_K(\rho n^2).
\end{equation}
Indeed, vertices \(\omega_0\) with \(\min\{p_{\omega_0},q_{\omega_0}\}<K\) contribute at most \(K\rho n\)
each, since \(p_{\omega_0},q_{\omega_0}<\rho n\). Thus their total contribution is at most
\(K\rho n^2\).

There is at most one vertex \(\omega_0\) with \(p_{\omega_0},q_{\omega_0}\ge K\). Indeed, if two
distinct vertices \(\omega_1,\omega_2\) satisfied
$p_{\omega_1},q_{\omega_1},p_{\omega_2},q_{\omega_2}\ge K$,
then, since \(K\ge10\), we could choose vertices
\[
  \omega_3\in N(v_1,\omega_1),\quad
  \omega_4\in N(\omega_1,v_2),\quad
  \omega_5\in N(v_2,\omega_2),\quad
  \omega_6\in N(\omega_2,v_1)
\]
successively so that they are distinct and avoid the already named vertices
\(\{v_1,v_2,\omega_1,\omega_2\}\). Indeed, at each step at most seven vertices are forbidden,
whereas each relevant common neighborhood has size at least \(K\).
Then
  $v_1$-$\omega_3$-$\omega_1$-$\omega_4$-$v_2$-$\omega_5$-$\omega_2$-$\omega_6$-$v_1$
is a \(C_8\), a contradiction. Hence at most one remaining vertex exists, and its contribution is at
most \((\rho n)^2\le\rho n^2\). This proves \eqref{eq:pq}.

For a 1-fat triple whose unique fat pair is \(v_1v_2\), the third vertex \(\omega_0\) satisfies
\(p_{\omega_0},q_{\omega_0}<\rho n\), and
\[
  F^{\#}(\{v_1,v_2,\omega_0\})\le d(v_1,v_2)p_{\omega_0}q_{\omega_0}.
\]
Therefore, using \eqref{eq:pq} for each fixed fat edge and then Lemma~\ref{lem:linear-budget},
\[
  \sum_{T\text{ 1-fat}}F^{\#}(T)
  \le
  \sum_{v_1v_2\in E(\tilde{G})}d(v_1,v_2)
  \sum_{\substack{\omega_0\in V(G)\setminus\{v_1,v_2\}\\p_{\omega_0},q_{\omega_0}<\rho n}}
  p_{\omega_0}q_{\omega_0}
  \le O_K(\rho n^2)\sum_{e\in E(\tilde{G})}d(e)
  =O_K(\rho n^3)+O_{K,\rho}(n^2).
\]
This proves the 1-fat estimate.
\end{proof}

\subsection{The zero-fat part}

The zero-fat triples are treated by separating uniformly large codegrees from cycles that are thin on
both alternating sides.

\begin{lemma}\label{lem:all-large}
Let \(K\) be sufficiently large and \(L\ge K\). Then
\[
  \sum_{\substack{\{p,q,r\}:\\K\le d(p,q),d(q,r),d(r,p)<L}}
  d(p,q)d(q,r)d(r,p)=O_K(Ln^2).
\]
\end{lemma}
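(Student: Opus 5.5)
We plan to work inside the auxiliary graph \(\tilde{G}_K=\{uv:d(u,v)\ge K\}\) from Lemma~\ref{lem:basic}. Each triple \(\{p,q,r\}\) in the sum has all three codegrees at least \(K\), so all three of its pairs are edges of \(\tilde{G}_K\); that is, the triple is a triangle of \(\tilde{G}_K\). We take \(K\ge K_0\), so that Lemma~\ref{lem:basic} makes \(\tilde{G}_K\) \(C_4\)-free. This will be the only place the lower threshold \(K\) is used.

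The key structural step is the following observation. In a \(C_4\)-free graph, every edge lies in at most one triangle. Indeed, if \(pqr\) and \(pqs\) were two triangles with \(r\ne s\), then \(p\)-\(r\)-\(q\)-\(s\)-\(p\) would be a \(4\)-cycle. Hence the map sending a triangle to one of its edges, say a fixed designated edge, is injective. More usefully, each edge of \(\tilde{G}_K\) is counted by at most one triangle.

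Next we bound each summand. One of the three codegrees is less than \(L\), so by AM--GM applied to the other two,
\[
  d(p,q)d(q,r)d(r,p)\le L\,d(p,q)d(q,r)\le \tfrac{L}{2}\bigl(d(p,q)^2+d(q,r)^2\bigr)\le \tfrac L2\bigl(d(p,q)^2+d(q,r)^2+d(r,p)^2\bigr).
\]
We then sum over all triangles of \(\tilde{G}_K\). Since every edge lies in at most one triangle, each term \(d(e)^2\) with \(e\in E(\tilde{G}_K)\) appears at most once. The total is therefore at most
\[
  \tfrac L2\sum_{u<v}d(u,v)^2=O(Ln^2)
\]
by the second estimate of Lemma~\ref{lem:basic}. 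This gives the claimed bound \(O_K(Ln^2)\), and in fact the constant is uniform in \(K\) once \(K\ge K_0\).

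I do not expect a serious obstacle. The only point needing care is to invoke the \(C_4\)-freeness of \(\tilde{G}_K\) correctly, which requires \(K\ge K_0=8\), and to check that the AM--GM step uses the upper threshold \(L\) on just one factor.
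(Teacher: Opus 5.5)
Your proof is correct, but it takes a more direct route than the paper's.

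The paper groups the triples dyadically, according to the range \([M,2M)\) containing the largest of the three codegrees. Because triangles of the \(C_4\)-free graph \(\tilde{G}_K\) are edge-disjoint, it can send each triple at scale \(M\) injectively to its largest pair, an edge of \(\tilde{G}_M\). It then bounds \(e(\tilde{G}_M)=O(n^2/M^2)\) by the second-moment estimate, multiplies by \((2M)^3\), and sums the geometric series \(\sum_{M<L}M<2L\).

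You instead use the pointwise bound \(d(p,q)d(q,r)d(r,p)\le \frac L2\bigl(d(p,q)^2+d(q,r)^2+d(r,p)^2\bigr)\). Each squared codegree is then charged to the unique triangle of \(\tilde{G}_K\) containing that pair, so the whole sum is at most \(\frac L2\sum_{u<v}d(u,v)^2\) in a single step.

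Both proofs rest on the same two facts: edge-disjointness of triangles in \(\tilde{G}_K\) for \(K\ge K_0\), and \(\sum_{u<v}d(u,v)^2=O(n^2)\). Yours removes the dyadic bookkeeping and gives an explicit constant. The paper's gives a scale-by-scale count of triangles, which is slightly finer but not needed for this lemma. Both constants are in fact absolute, even though the paper writes \(O_K\).
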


\begin{proof}
If there is no triple \(\{v_1,v_2,v_3\}\) satisfying
 $ K\le d(v_1,v_2),d(v_1,v_3),d(v_2,v_3)<L$,
then the left-hand side is zero. Thus it remains to consider a triple \(\{v_1,v_2,v_3\}\) and
a dyadic value \(M\in\{K,2K,2^2K,\ldots\}\) such that \(M\le L\),
\(K\le d(v_1,v_2),d(v_1,v_3),d(v_2,v_3)<L\), and
\(\max\{d(v_1,v_2),d(v_1,v_3),d(v_2,v_3)\}\in[M,2M)\).

We assume that $d(v_1,v_2)\ge\max\{d(v_2,v_3),d(v_1,v_3)\}$. Then
\[
  v_1v_2,\ v_1v_3,\ v_2v_3\in E(\tilde{G}_K),
  \qquad
  v_1v_2\in E(\tilde{G}_M).
\]

Since \(K\) is sufficiently large, Lemma~\ref{lem:basic} gives that \(\tilde{G}_K\) is \(C_4\)-free.
Hence an edge of \(\tilde{G}_K\) lies in at most one triangle of \(\tilde{G}_K\). Therefore the map from
triples at scale \(M\) to their chosen maximum pair is injective. Hence the number of triples at this
scale is at most \(e(\tilde{G}_M)\).

By Lemma~\ref{lem:basic},
\begin{align*}
   e(\tilde{G}_M)M^2\le\sum_{uv\in E(\tilde{G}_M)}d(u,v)^2=O(n^2)
   \quad\text{ and }\quad
   e(\tilde{G}_M)=O(\frac{n^2}{M^2}).
\end{align*}
 
For every triple at this scale, all three pair-codegrees are below \(2M\), so
\[
  d(v_1,v_2)d(v_1,v_3)d(v_2,v_3)\le (2M)^3=O(M^3).
\]
Thus the total contribution of scale \(M\) is at most
\[
  O(M^3)e(\tilde{G}_M)=O(Mn^2).
\]
It remains to sum over the dyadic scales \(M=K,2K,4K,\ldots\) with \(M<L\). If the last such
scale is \(2^tK\), then \(2^tK<L\), and
\[
  K+2K+\cdots+2^tK
  =K(2^{t+1}-1)
  <2^{t+1}K
  <2L.
\]
Therefore \(\sum_{M<L}M=O(L)\), and the total contribution over all scales is \(\sum_MO(Mn^2)=O_K(Ln^2)\).
\end{proof}

We say that a pair \(\{u,v\}\) is \(K\)-thin if \(d(u,v)<K\). When \(K\) is fixed, we simply call it
thin. A copy of \(C_6\) is \(K\)-thin if each alternating triple contains a \(K\)-thin pair.

\begin{lemma}\label{lem:thin}
The number of \(K\)-thin copies of \(C_6\) is $O_K(n^{5/2})$.
\end{lemma}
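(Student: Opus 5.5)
Write a $K$-thin copy as $C=a$-$x$-$b$-$y$-$c$-$z$-$a$ with alternating triples $T=\{a,b,c\}$ and $W=\{x,y,z\}$. By hypothesis $T$ contains a thin pair, say $d(a,b)<K$ after relabeling, and $W$ contains a thin pair, say $d(x,y)<K$ or one of the other two. Up to the dihedral symmetry of $C_6$ there are only two relative positions for these pairs. In the \emph{adjacent} position the thin pair of $W$ shares a vertex of $C$ with the thin pair of $T$, so that $x\in N(a,b)$ and $y$ is its neighbor along the cycle, e.g.\ the thin pairs are $ab$ and $xy$. In the \emph{opposite} position the thin pair of $W$ is $yz$, the pair facing $x$. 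For each position I will count copies with $O(1)$ bookkeeping (six rotations and two reflections), so it suffices to bound each configuration by $O_K(n^{5/2})$.

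\textbf{Adjacent position.} Choose the path $a$-$x$-$b$-$y$ first. I will bound the number of such paths with $d(a,b)<K$ and $d(x,y)<K$ by $O_K(n^{3/2})$ times something small. Choose the edge $ax$ in $O(n^{5/4})$ ways. The vertex $b$ is then a neighbor of $x$ with $d(a,b)<K$, and $y$ is a neighbor of $b$ with $d(x,y)<K$. This is the weak point, since degrees are not bounded. A cleaner route uses codegree sums. The copies are determined by $(a,b,c)$ together with $x\in N(a,b)$, $y\in N(b,c)$, $z\in N(c,a)$. Given $a,b$ with $d(a,b)<K$, there are fewer than $K$ choices of $x$. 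Hence
\[
\#\le K\sum_{a,b}\mathbf 1[ab\text{ thin}]\sum_c d(b,c)d(c,a),
\]
which is again cubic. So thinness on $W$ must be used as well. I do this symmetrically: $y\in N(x,z)$ with $d(x,y)<K$ means $y$ is not constrained by $x$ alone.

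The approach I will actually commit to is to count the 6-cycle as two 3-paths glued. Let $x$ be the common vertex of the thin pairs $ab$ and $xy$ in the adjacent case (so $x$ is adjacent to $a$ and $b$, and $b$ is adjacent to $y$). Consider the path $P=z$-$a$-$x$-$b$-$y$-$c$ split at the pair $\{a,b\}$ and the pair $\{x,y\}$. The number of triples $(a,x,b)$ with $ab$ thin is at most $K$ times the number of pairs $\{a,b\}$ with $d(a,b)\ge1$, which is at most $\sum_v d(v)^2$. The remaining closure through $c$ and $z$ is handled by Cauchy--Schwarz against $\sum d(u,v)^2=O(n^2)$ from Lemma~\ref{lem:basic}.

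Concretely, I will aim for
\[
\#\le \sum_{\{a,b\}\text{ thin}}\ \sum_{x\in N(a,b)}\ \sum_{c} d(b,c)\,d(c,a)\cdot\mathbf 1[\ldots],
\]
where the thinness of $W$ lets me replace one of the factors $d(b,c)$ or $d(c,a)$ by a sum over $y$ of $\mathbf 1[d(x,y)<K]$. The aim is to obtain a bound of the form $K^2\cdot\#\{\text{paths of length 4}\}^{1/2}\cdot O(n^{2})^{1/2}$. Paths of length $3$ number at most $2e(G)\cdot\max$-type quantities, and I expect to use $\sum_{u,v}d(u,v)=\sum_w d(w)^2$ together with $e(G)=O(n^{5/4})$ and $\sum_{u,v} d(u,v)^2=O(n^2)$ to get $O(n^{5/2})$.

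\textbf{Main obstacle.} Turning the two thinness conditions into a genuine $n^{1/2}$ saving without a degree bound is the crux. What I would try first is a Cauchy--Schwarz of the form
\[
\sum_{\{a,c\}}\#\{\text{thin-constrained paths } a\to c\}\cdot d(a,c)\le\Big(\sum_{\{a,c\}}\#\{\cdot\}^2\Big)^{1/2}\Big(\sum d(a,c)^2\Big)^{1/2}.
\]
Here the first factor counts pairs of constrained paths, i.e.\ 8-walks, and should be controlled using $C_8$-freeness in the spirit of Theorem~\ref{thm:ggmv-c4}. The opposite position is handled by the same Cauchy--Schwarz, pairing the paths through the two thin pairs.
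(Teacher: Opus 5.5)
Your proposal does not yet contain a proof. In the adjacent position you end with a bound you only \emph{aim for}, and you explicitly leave the $n^{1/2}$ saving as the unresolved crux. The opposite position is deferred to \emph{the same Cauchy--Schwarz}. That Cauchy--Schwarz step is also not justified as stated. Write the adjacent count as $\sum_{\{a,c\}}N_{ac}\,d(a,c)$, with $N_{ac}$ the number of constrained paths $a$-$x$-$b$-$y$-$c$. Then the factor $\sum N_{ac}^2$ counts pairs of such $4$-edge paths with common ends. $C_8$-freeness excludes only the internally disjoint pairs, and the intersecting, degenerate pairs are exactly what remains to be counted. Nothing in Theorem~\ref{thm:ggmv-c4} controls them. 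You would need $\sum N_{ac}^2=O_K(n^3)$, via a case analysis over intersection patterns that you do not supply.

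The missing idea is that the saving comes from $e(G)^2=O(n^{5/2})$ (Theorem~\ref{thm:bondy-simonovits}), not from $\sum d(u,v)^2=O(n^2)$; with that, no Cauchy--Schwarz is needed.

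In the opposite position (thin $ab$ and $yz$), choose the two directed cycle-edges $by$ and $za$ outside the thin arcs in at most $(2e(G))^2$ ways. Then $x\in N(a,b)$ and $c\in N(y,z)$ have fewer than $K$ choices each.

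In the adjacent position (thin $ab$ and $xy$), fix the $3$-path $a$-$x$-$b$-$y$. A completion $y$-$c$-$z$-$a$ is an edge $cz$ with $c\in N(y)$ and $z\in N(a)$, both off the path. Split these edges into four graphs according to $N(y)\setminus N(a)$, $N(a)\setminus N(y)$ and $N(a)\cap N(y)$. None of the four contains a $3$-edge path, because such a path, oriented from $N(a)$ to $N(y)$, closes a $C_8$ with $y$-$b$-$x$-$a$. Erd\H{o}s--Gallai then bounds the number of completions by $O(d(a)+d(y))$. Summing over constrained paths gives $\sum_{d(a,b)<K}d(a)\,d(a,b)\,d(b)\le K(2e(G))^2$ for the $d(a)$ term. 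The $d(y)$ term is symmetric, using $d(x,y)<K$.

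So the unbounded degrees you flagged as the weak point are harmless: they are paid for by $\sum_a d(a)\sum_b d(b)=4e(G)^2$.
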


\begin{proof}
For each \(K\)-thin \(C_6\) written as \(a\)-\(x\)-\(b\)-\(y\)-\(c\)-\(z\)-\(a\), inspect the pairs
\(ab,bc,ca\) in this order and choose the first \(K\)-thin one. Similarly, inspect \(xy,yz,zx\) in
this order and choose the first \(K\)-thin one. This assigns the cycle to exactly one of the
following nine placements:
\[
\begin{array}{c|ccc}
 & xy & yz & zx\\ \hline
ab & \mathrm{A} & \mathrm{O} & \mathrm{A}\\
bc & \mathrm{A} & \mathrm{A} & \mathrm{O}\\
ca & \mathrm{O} & \mathrm{A} & \mathrm{A}
\end{array}
\]
where \(\mathrm{O}\) means that the corresponding two-edge arcs are vertex-disjoint, and
\(\mathrm{A}\) means that they share one cycle-edge. Thus the opposite placements are precisely
\((ab,yz)\), \((bc,zx)\), and \((ca,xy)\); the other six are adjacent. 

Consider a \(K\)-thin copy $C=a$-$x$-$b$-$y$-$c$-$z$-$a$. If its chosen thin pairs form the opposite pattern
\((ab,yz)\), then \(d(a,b)<K\) and \(d(y,z)<K\). First choose the directed edges \(by\) and
\(za\). There are at most \((2e(G))^2\) choices. Once they are fixed,
\(x\in N(a,b)\) and \(c\in N(y,z)\) each have fewer than \(K\) choices. Thus, the three opposite patterns contribute \(O_K(e(G)^2)\).

For an adjacent pattern, by symmetry it is enough to consider the case $d(a,b)<K$ and $d(x,y)<K$,
where the two thin pairs correspond to the two arcs $a$-$x$-$b$ and $x$-$b$-$y$, which share the edge
\(xb\). Fix the 3-path $a$-$x$-$b$-$y$ and put
\[
  P=N(y)\setminus\{a,x,b,y\},\qquad
  Q=N(a)\setminus\{a,x,b,y\}.
\]
A completion $y$-$c$-$z$-$a$ corresponds to an ordered pair \((c,z)\in P\times Q\) with
\(cz\in E(G)\). Set $I=P\cap Q$. 
We partition the relevant edges among the four graphs
\[
  G[P\setminus Q,Q\setminus P],\qquad G[P\setminus Q,I],\qquad G[I,Q\setminus P],\qquad G[I].
\]
None of these graphs contains a 3-path $p_1$-$p_2$-$p_3$-$p_4$. In each of the first three
graphs, the path may be oriented so that \(p_1\in Q\) and \(p_4\in P\); in the fourth graph both
endpoints lie in \(I\subseteq P\cap Q\). In either case,
$a$-$p_1$-$p_2$-$p_3$-$p_4$-$y$-$b$-$x$-$a$
is a copy of \(C_8\), a contradiction. 

The Erd\H{o}s--Gallai theorem~\cite{ErdosGallai} gives at most \(N\)
edges in every \(N\)-vertex graph containing no 3-path. Hence the four graphs above have
at most \(4(|P|+|Q|)\) edges in total. An edge inside \(I\) represents two ordered pairs
\((c,z)\), and every other relevant edge represents one, so the number of completions is at most
\[
  8(|P|+|Q|)\le8(d(y)+d(a)).
\]

Therefore the number of \(C_6\) copies assigned to this adjacent pattern is at most
\[
  8\sum_{\substack{\text{paths }a\text{-}x\text{-}b\text{-}y\\ d(a,b)<K,\ d(x,y)<K}}
  (d(a)+d(y)).
\]
For each ordered pair \((a,b)\) with
\(d(a,b)<K\), there are \(d(a,b)\) choices for \(x\in N(a,b)\), and then at most \(d(b)\) choices
for \(y\in N(b)\). Hence
\[
  \sum_{\substack{\text{paths }a\text{-}x\text{-}b\text{-}y\\ d(a,b)<K,\ d(x,y)<K}}d(a)
  \le\sum_{\substack{a\ne b\\d(a,b)<K}}d(a)d(a,b)d(b)
  \le K\left(\sum_ad(a)\right)^2
  =4Ke(G)^2.
\]
The \(d(y)\) term is symmetric, now using the thin pair \(xy\). Multiplying by the six adjacent
placements and the preceding absolute completion constant still gives \(O_K(e(G)^2)\). Together
with the opposite placements, all thin cycles contribute \(O_K(e(G)^2)\). 
Lemma~\ref{lem:basic} shows that $e(G)=O(n^{5/4})$, which completes the proof.
\end{proof}

\begin{lemma}\label{lem:zero-fat}
For fixed sufficiently large \(K\), the total contribution of charged copies whose charged triple is 0-fat is
\[
  O_K(\rho n^3)+O_K(n^{5/2}).
\]
\end{lemma}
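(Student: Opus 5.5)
We split the charged copies \(C=a\)-\(x\)-\(b\)-\(y\)-\(c\)-\(z\)-\(a\) whose charged triple \(S\) is 0-fat according to whether \(S\) contains a \(K\)-thin pair. Since \(S\) is 0-fat, each of its three codegrees is below \(\rho n\).

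\emph{Case 1: all three pairs of \(S\) have codegree at least \(K\).} Then \(S=\{p,q,r\}\) satisfies
\[
K\le d(p,q),\,d(q,r),\,d(r,p)<\rho n .
\]
By \eqref{eq:charge}, \(F^{\#}(S)\le d(p,q)d(q,r)d(r,p)\). We sum over all such triples and apply Lemma~\ref{lem:all-large} with \(L=\rho n\). This requires \(\rho n\ge K\), which holds for large \(n\); when \(\rho n<K\) the range of the sum is empty. The total contribution of these triples is therefore \(O_K(\rho n\cdot n^2)=O_K(\rho n^3)\).

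\emph{Case 2: \(S\) contains a \(K\)-thin pair.} We use the charging rule to show that the other alternating triple \(W\) of \(C\) also contains a \(K\)-thin pair.

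\begin{itemize}
\item Suppose \(W\) had no \(K\)-thin pair. Then every pair of \(W\) would have codegree at least \(K\), so \(\mu(W)\ge K\).
\item The triple \(S\) contains one pair of codegree less than \(K\), and its other two pairs have codegree less than \(\rho n\).
\item This alone does not give \(\mu(S)<\mu(W)\). The averages compare badly, because \(S\) can have two pairs of codegree about \(\rho n\) together with one thin pair.
\end{itemize}

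So the charging rule does not immediately force \(C\) to be \(K\)-thin. We therefore split Case 2 further according to \(W\).

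\emph{Case 2a: \(W\) also contains a \(K\)-thin pair.} Then \(C\) is a \(K\)-thin copy of \(C_6\). By Lemma~\ref{lem:thin}, there are \(O_K(n^{5/2})\) such copies.

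\emph{Case 2b: all three pairs of \(W\) have codegree at least \(K\).} Here we do not use \(F^{\#}(S)\) at all. Instead we count the cycles through the triple \(W\), bounding their number by \(d(x,y)d(y,z)d(z,x)\). We then split according to the fatness of \(W\).

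\begin{itemize}
\item \textbf{\(W\) is 0-fat.} Its three codegrees lie in \([K,\rho n)\). Summing over all such \(W\) and applying Lemma~\ref{lem:all-large} again gives \(O_K(\rho n^3)\).
\item \textbf{\(W\) is 1-fat or 2-fat.} The bound \(d(x,y)d(y,z)d(z,x)\) for such triples is exactly the quantity estimated in the proof of Lemma~\ref{lem:one-two-fat}. That proof bounds \(d(e_1)d(e_2)\rho n\) and \(d(v_1,v_2)p_{\omega_0}q_{\omega_0}\) summed over all triples of the given type, not only over charged ones. This gives \(O_K(\rho n^3)+O_{K,\rho}(n^2)\).
\item \textbf{\(W\) is 3-fat.} We need the following claim: if \(W\) is a triangle of \(\tilde G\), then the alternating triple \(S\) has at most one pair of codegree at least \(K\).
\end{itemize}

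To prove the claim, write \(W=\{x,y,z\}\) with all three pairs of codegree at least \(\rho n\). Suppose \(d(a,b)\ge K\). We try to build a \(C_8\) of the form
\[
a\text{-}w\text{-}b\text{-}y\text{-}\cdots ,
\]
where \(w\in N(a,b)\) is a new vertex, and the remaining part of the cycle detours through a common neighbour of the fat pair \(yz\) in place of the vertex \(c\). The aim is to conclude that only \(O(1)\) vertices \(c\) can complete the cycle. The resulting count is then at most
\[
\sum_{T\in\triangle(\tilde G)} |P_T|^2\cdot O_K(1)=O_{K,\rho}(n^2).
\]

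The main obstacle is the 3-fat subcase of Case 2b, which needs this ad hoc \(C_8\)-embedding. If the embedding fails, a fallback is to drop the claim and use the crude bound: over the \(O_\rho(1)\) fat triangles, the number of copies with \(S\) having a thin pair is at most
\[
\sum_{T} K\,|P_T|^2 = O_{K,\rho}(n^2),
\]
since the thin pair of \(S\) contributes a factor below \(K\) and the other two factors are each at most \(|P_T|\).

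Combining the cases, the total contribution of charged copies with a 0-fat charged triple is
\[
O_K(\rho n^3)+O_K(n^{5/2})+O_{K,\rho}(n^2).
\]
For \(n\) large in terms of \(\rho\), the last term is at most \(n^{5/2}\), which gives the stated bound.
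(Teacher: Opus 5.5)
\textbf{There is a genuine gap in the subcase where \(W\) is 3-fat.} Both your claim-based count and your fallback bound \(\sum_T K|P_T|^2\) are false, because neither uses the fact that \(C\) is charged to \(S\).

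Take \(G=K_{3,n-3}\) and let \(T=\{x,y,z\}\) be the 3-side. Then \(T\) is a triangle of \(\tilde G\) with \(|P_T|=n-3\). Every one of the \(6\binom{n-3}{3}=\Theta(n^3)\) copies of \(C_6\) has \(W=T\). Its other alternating triple \(S\) has all three pair-codegrees equal to \(3<K\).

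This shows where each argument breaks:
\begin{itemize}
\item When \(W\) is fixed, you are choosing \(a,b,c\). The condition \(d(a,b)<K\) restricts which pairs occur, not how many, so the thin pair of \(S\) supplies no factor \(K\).
\item Your claim, that \(S\) has at most one pair of codegree at least \(K\), is true. Indeed, \(a\)-\(w\)-\(b\)-\(w'\)-\(c\)-\(y\)-\(u\)-\(z\)-\(a\) with \(w\in N(a,b)\), \(w'\in N(b,c)\), \(u\in N(y,z)\) would be a \(C_8\). But the claim says nothing about triples \(S\) all of whose pairs are thin. For those, the hoped-for ``only \(O(1)\) choices of \(c\)'' fails.
\end{itemize}
In this example these cycles are not counted in the 0-fat contribution only because they are charged to \(T\). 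Your argument never uses that.

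\textbf{The missing idea is to use the charging rule in the other direction.} You only asked whether the rule forces \(W\) to contain a thin pair. You correctly saw that it does not. But since \(C\) is charged to \(S\), we have \(\mu(W)\le\mu(S)<\rho n\). Hence \(W\) is never 3-fat, and in fact every pair-codegree of \(W\) is below \(3\rho n\).

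With this observation your 3-fat subcase is vacuous, and the rest of your argument stands. In particular, reusing the estimates from Lemma~\ref{lem:one-two-fat} for 1- and 2-fat \(W\) is valid, since those sums run over all triples of the given type.

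You can then also drop the split of \(W\) by fatness, which is what the paper does. Apply Lemma~\ref{lem:all-large} once, with \(L=3\rho n\), to whichever of \(S,W\) has all three codegrees at least \(K\). Send the remaining cycles, which are \(K\)-thin, to Lemma~\ref{lem:thin}.
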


\begin{proof}
Let $C=a$-$x$-$b$-$y$-$c$-$z$-$a$ be charged to the zero-fat triple \(T=\{a,b,c\}\), and let \(W=\{x,y,z\}\). Since
\(T\) is zero-fat, all three pair-codegrees in \(T\) are below \(\rho n\), and hence
\(\mu(T)<\rho n\). Since the cycle is charged to \(T\),
\[
  \mu(W)\le\mu(T)<\rho n.
\]
Thus every pair-codegree in \(W\) is below \(3\rho n\), while every pair-codegree in \(T\) is
below \(\rho n\). 

If one of \(T,W\) has all three pair-codegrees at least \(K\), choose one such alternating triple
\(S\) only for this estimate: take \(S=T\) if \(T\) has this property, and otherwise take \(S=W\). For a fixed choice of
\(S=\{x_1,x_2,x_3\}\), the number of completions is at most $d(x_1,x_2)d(x_2,x_3)d(x_1,x_3)$. For
\(n\) large enough that \(3\rho n\ge K\), applying
Lemma~\ref{lem:all-large} with \(L=3\rho n\) gives \(O_K(\rho n^3)\) cycles; if
\(3\rho n<K\), this class is empty. If neither side has all three pair-codegrees at least \(K\), the cycle is \(K\)-thin, and
Lemma~\ref{lem:thin} applies.
\end{proof}

Combining these cases gives the desired estimate.

\begin{lemma}\label{lem:global}
For fixed sufficiently large \(K\) and fixed \(0<\rho<1\), every sufficiently large \(n\)-vertex
\(C_8\)-free graph satisfies
\[
  \N(C_6,G)\le n^3+O_K(\rho n^3)+O_K(n^{5/2})+O_{K,\rho}(n^2).
\]
\end{lemma}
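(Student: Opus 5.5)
The plan is to decompose $\N(C_6,G)$ according to the charging rule and then sum the estimates already established. Every copy $C$ of $C_6$ has two alternating triples, and the tie-breaking rule charges $C$ to exactly one of them. Hence
\[
  \N(C_6,G)=\sum_S F^{\#}(S),
\]
where $S$ ranges over all triples of vertices. Fix $K$ large enough that Lemmas~\ref{lem:one-two-fat}, \ref{lem:all-large}, \ref{lem:thin} and~\ref{lem:zero-fat} apply (in particular $K\ge 10$ and $K\ge K_0$), and fix $0<\rho<1$. With respect to $\tilde G=\tilde G_{\rho n}$, every triple is $i$-fat for exactly one $i\in\{0,1,2,3\}$. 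I will split the sum into these four classes and bound each one separately.

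The 3-fat class is handled by \eqref{eq:fat-triangles}, which rests on the packing Lemma~\ref{lem:packing}. It gives
\[
  \sum_{T\text{ 3-fat}}F^{\#}(T)\le n^3+O_\rho(n^2).
\]
One point deserves a check here. The step $\sum_T\mu(T)^3\le(\sum_T|P_T|)^3$ uses $\mu(T)\le|P_T|$, which holds because each pair codegree is at most $|P_T|$, together with the elementary inequality $\sum a_i^3\le(\sum a_i)^3$ for $a_i\ge0$. Both are fine. The error term $O_\rho(1)$ in $\sum_T|P_T|\le n+O_\rho(1)$ then expands to $(n+O_\rho(1))^3=n^3+O_\rho(n^2)$.

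Lemma~\ref{lem:one-two-fat} bounds the 2-fat and 1-fat classes together by
\[
  O(\rho n^3)+O_K(\rho n^3)+O_\rho(n^2)+O_{K,\rho}(n^2).
\]
Lemma~\ref{lem:zero-fat} bounds the 0-fat class by $O_K(\rho n^3)+O_K(n^{5/2})$. Absorbing the $O(\rho n^3)$ term into $O_K(\rho n^3)$ and the $O_\rho(n^2)$ term into $O_{K,\rho}(n^2)$ yields the stated bound.

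The only step needing care is this bookkeeping, in particular confirming that every copy is counted exactly once. Lemma~\ref{lem:zero-fat} counts copies whose charged triple is 0-fat, which is a subset of the $C_6$-copies, so there is no double counting. We also need $n$ large enough for each lemma to hold, in particular for $\tilde G_{\rho n}$ to be $C_4$-free. Since $K$ and $\rho$ are fixed, taking $n$ larger than the maximum of the finitely many thresholds suffices. No genuine obstacle is expected: the statement is the sum of the four cases.
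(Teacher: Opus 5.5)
Your proposal is correct and is essentially the paper's proof. Both partition the copies of $C_6$ by the fatness of their charged triple, add the 3-fat bound \eqref{eq:fat-triangles}, Lemma~\ref{lem:one-two-fat} and Lemma~\ref{lem:zero-fat}, and absorb the weaker error terms; your extra bookkeeping (each copy is charged to exactly one triple, and $n$ is taken beyond finitely many thresholds) is what the paper leaves implicit.
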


\begin{proof}
Use equation~\eqref{eq:fat-triangles} for the 3-fat triples, Lemma~\ref{lem:one-two-fat} for the
2-fat and 1-fat triples, and Lemma~\ref{lem:zero-fat} for the zero-fat triples.
\end{proof}

Since \(K_{3,n-3}\) is \(C_8\)-free and contains \(6\binom{n-3}{3}=n^3+O(n^2)\) copies of
\(C_6\), while Lemma~\ref{lem:global} gives the matching upper bound after taking
\(\rho\to0\), we obtain
\[
  \ex(n,C_6,C_8)=(1+o(1))n^3.
\]
We shall need the corresponding stability statement. Put
\begin{equation}\label{eq:Rn-def}
  R(n):=6\binom{n-3}{3}+12(n-5)=n^3-12n^2+59n-120.
\end{equation}

\begin{lemma}\label{lem:stability}
For every \(\delta>0\), there is \(\rho>0\) such that the following holds for all sufficiently large
\(n\). If \(G\) is \(C_8\)-free and $\N(C_6,G)\ge R(n)$,
then some triple \(\{a,b,c\}\) satisfies
\[
  |N(a)\cap N(b)\cap N(c)|\ge(1-\delta)n.
\]
\end{lemma}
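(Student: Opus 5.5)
The plan is to run the counting of Lemma~\ref{lem:global} in reverse. The aim is to show that almost all of the $C_6$-count must come from 3-fat triples. Then the packing bound of Lemma~\ref{lem:packing} forces one fat triangle to carry almost everything.

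\textbf{Choice of constants.} First fix $K$ large enough that Lemmas~\ref{lem:one-two-fat}, \ref{lem:all-large} and~\ref{lem:zero-fat} apply; $K$ is an absolute constant. Given $\delta$, set $\varepsilon:=\delta/10$ and choose $\rho>0$ so small that the $O_K(\rho n^3)$ terms in Lemmas~\ref{lem:one-two-fat} and~\ref{lem:zero-fat} total at most $\varepsilon n^3/2$.

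\textbf{Step 1: most of the count is 3-fat.} Every copy of $C_6$ is charged to exactly one triple. Hence
\[
  \N(C_6,G)=\sum_{T}F^{\#}(T)=\sum_{T\text{ 3-fat}}F^{\#}(T)+\sum_{i\le 2}\ \sum_{T\ i\text{-fat}}F^{\#}(T).
\]
By the choice of $\rho$, the last three sums total at most $\varepsilon n^3/2+O_K(n^{5/2})+O_{K,\rho}(n^2)$. Since $R(n)=n^3-O(n^2)$, for $n$ large this gives
\[
  \sum_{T\in\triangle(\tilde G)}\mu(T)^3\ \ge\ \sum_{T\text{ 3-fat}}F^{\#}(T)\ \ge\ (1-\varepsilon)n^3 .
\]

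\textbf{Step 2: one triangle carries almost everything.} Let $m:=\max_{T}\mu(T)$ over $\triangle(\tilde G)$. Using $\mu(T)\le|P_T|$ and Lemma~\ref{lem:packing},
\[
  \sum_T\mu(T)^3\le m^2\sum_T|P_T|\le m^2\bigl(n+O_\rho(1)\bigr).
\]
Therefore $m\ge(1-\varepsilon)^{1/2}n-O_\rho(1)\ge(1-\varepsilon)n$ for large $n$. Fix $T=\{a,b,c\}$ attaining $m$.

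\textbf{Step 3: from large pair codegrees to a large triple neighbourhood.} Each of $d(a,b)$, $d(b,c)$, $d(c,a)$ is at most $|P_T|\le n+O_\rho(1)$. Their sum is $3\mu(T)\ge 3(1-\varepsilon)n$.

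Write $A_1=N(a,b)$, $A_2=N(b,c)$, $A_3=N(c,a)$, all contained in $P_T$. Note that $A_1\cap A_2\cap A_3=N(a)\cap N(b)\cap N(c)$. For three subsets of a set $P$ one has
\[
  |A_1\cap A_2\cap A_3|\ge|A_1|+|A_2|+|A_3|-2|P|.
\]
Applying this with $P=P_T$ gives
\[
  |N(a)\cap N(b)\cap N(c)|\ge 3(1-\varepsilon)n-2n-O_\rho(1)\ge(1-4\varepsilon)n\ge(1-\delta)n
\]
for $n$ sufficiently large. This is the required triple.

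\textbf{Main obstacle.} The only delicate point is the order of quantifiers. The error terms in Lemmas~\ref{lem:one-two-fat} and~\ref{lem:zero-fat} are $O_K(\rho n^3)$, so $K$ must be fixed before $\rho$ so that these constants do not depend on $\rho$. The $\rho$-dependent errors then appear only at orders $n^2$ and $n^{5/2}$, and they are absorbed by taking $n$ large. The lower bound $R(n)=n^3-O(n^2)$ is strong enough here, because only first-order information is used.
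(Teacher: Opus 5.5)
Your proposal is correct and follows essentially the same route as the paper. You fix $K$ first, choose $\rho$ small enough that the non-3-fat contributions from Lemmas~\ref{lem:one-two-fat} and~\ref{lem:zero-fat} are negligible, and deduce $\max_T\mu(T)\ge(1-\varepsilon)n$ from $\sum_T\mu(T)^3\le\mu_{\max}^2\sum_T|P_T|$ and Lemma~\ref{lem:packing}, finishing with the complement union bound. The only point you leave implicit, which the paper states, is that the positive lower bound in Step~1 forces $\triangle(\tilde{G})\neq\varnothing$, so the maximum in Step~2 is taken over a nonempty set.
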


\begin{proof}
Choose once and for all \(K\ge10\), sufficiently large for Lemma~\ref{lem:all-large}. With that
\(K\) fixed, choose \(\rho>0\) sufficiently small
in terms of \(K\) and \(\delta\), and only then take \(n\) sufficiently large in terms of
\(K,\rho,\delta\). By Lemmas~\ref{lem:one-two-fat} and~\ref{lem:zero-fat}, all charged triples
that are not 3-fat contribute at most $O_K(\rho n^3)+O_K(n^{5/2})+O_{K,\rho}(n^2)$.

Using \(R(n)=n^3+O(n^2)\) and the assumption \(\N(C_6,G)\ge R(n)\), the 3-fat triples contribute at least
\begin{equation}\label{eq:three-fat-lower}
  \sum_{\substack{S\text{ 3-fat}}}F^\#(S)
  \ge (1-C_K\rho-o_{K,\rho}(1))n^3
\end{equation}
for a constant \(C_K\) depending only on \(K\).

For all sufficiently large \(n\), the lower bound in \eqref{eq:three-fat-lower} is positive, so \(\tilde{G}\) contains at least one
triangle. Let \(T_1,\ldots,T_s\) be the triangles of \(\tilde{G}\), and put \(\mu_i=\mu(T_i)\). By
equations~\eqref{eq:charge} and~\eqref{eq:three-fat-lower},
\begin{equation}\label{eq:mu-cube-lower}
  \sum_i\mu_i^3\ge(1-C_K\rho-o_{K,\rho}(1))n^3.
\end{equation}
On the other hand, Lemma~\ref{lem:packing} gives
\begin{equation}\label{eq:mu-sum-upper}
  \sum_i\mu_i\le n+O_\rho(1).
\end{equation}
Writing \(\mu_{\max}=\max_i\mu_i\), we have
\begin{equation}\label{eq:mu-cube-max}
  \sum_i\mu_i^3\le\mu_{\max}^2\sum_i\mu_i.
\end{equation}
Combining \eqref{eq:mu-cube-lower}, \eqref{eq:mu-sum-upper}, and \eqref{eq:mu-cube-max} gives
\begin{align*}
\mu_{\max}^2
  \ge\frac{(1-C_K\rho-o_{K,\rho}(1))n^3}{n+O_\rho(1)}
\quad\text{ and thus }\quad
  \mu_{\max}\ge(1-C'_K\rho-o_{K,\rho}(1))n.
\end{align*}
  
Choose \(T_i=\{a,b,c\}\) attaining the maximum. Then
\[
  |N(a,b)|+|N(b,c)|+|N(c,a)|\ge3(1-\varepsilon)n,
\]
where \(\varepsilon=C'_K\rho+o_{K,\rho}(1)\). Therefore, by the union bound for complements,
\begin{align*}
  |N(a,b)\cap N(b,c)\cap N(c,a)|
  \ge |N(a,b)|+|N(b,c)|+|N(c,a)|-2n
  \ge(1-3\varepsilon)n.
\end{align*}
This intersection is \(N(a)\cap N(b)\cap N(c)\). Choose \(\rho\) so that
\(3C'_K\rho<\delta/2\), and then take \(n\) large enough that the remaining \(o_{K,\rho}(1)\) term
is below \(\delta/6\). The result follows.
\end{proof}

\section{Defect absorption and exactness}

Throughout this section, \(G\) is an \(n\)-vertex \(C_8\)-free graph. We remove the exceptional
vertices left by Lemma~\ref{lem:stability}. Fix a triple \(\{a,b,c\}\subseteq V(G)\), put
\(A=\{a,b,c\}\), and define
\[
  M\coloneqq N(a)\cap N(b)\cap N(c),\qquad
  Q:=V(G)\setminus(A\cup M).
\]
Since neighborhoods are open, \(A\cap M=\varnothing\). Write \(m=|M|\) and \(q=|Q|\). 

\begin{lemma}\label{lem:terminal}
If \(m\ge5\), then
\[
  e(G[M])\le1
  \qquad\text{and}\qquad
  |N(v)\cap M|\le1\quad\text{for every }v\in Q.
\]
Moreover,
\[
  \N(C_6,G[A\cup M])=6\binom m3+4e(G[A])e(G[M])(m-2).
\]
\end{lemma}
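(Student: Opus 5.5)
The plan is to prove the two structural claims by exhibiting explicit $C_8$'s that use spare vertices of $M$, and then to obtain the count by a run decomposition of a $6$-cycle in $G[A\cup M]$. The hypothesis $m\ge5$ is used only to guarantee enough spare vertices of $M$. Every vertex of $M$ is adjacent to all of $a,b,c$, so any path that leaves one vertex of $A$ into $M$ and returns to another vertex of $A$ can be closed up through further vertices of $M$.

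\emph{The edge bound $e(G[M])\le1$.} Suppose $G[M]$ has two edges. There are two cases.
\begin{itemize}
\item If they are disjoint, say $uv$ and $wz$, choose $t\in M\setminus\{u,v,w,z\}$, which exists since $m\ge5$. Then $a$-$u$-$v$-$b$-$w$-$z$-$c$-$t$-$a$ is a $C_8$.
\item If they share a vertex, say $uv$ and $vw$, choose distinct $x,y\in M\setminus\{u,v,w\}$, which exist since $m\ge5$. Then $a$-$u$-$v$-$w$-$b$-$x$-$c$-$y$-$a$ is a $C_8$.
\end{itemize}
In both cases the eight vertices are distinct and every consecutive pair is an edge, a contradiction.

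\emph{The degree bound $|N(v)\cap M|\le1$ for $v\in Q$.} Suppose $v\in Q$ has two neighbours $u,w\in M$. Take distinct $x,y\in M\setminus\{u,w\}$. Then $a$-$u$-$v$-$w$-$b$-$x$-$c$-$y$-$a$ is a $C_8$, and its vertices are distinct because $v\notin A\cup M$.

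\emph{The count $\N(C_6,G[A\cup M])$.} The graph $G[A\cup M]$ consists of a complete bipartite graph $K_{3,m}$ between $A$ and $M$, the edges of $G[A]$, and at most one edge inside $M$. Take a $6$-cycle $C$ in this graph and split it into maximal runs of consecutive vertices lying in $A$ and in $M$. These runs alternate, so there are $k$ runs of each type for some $k\ge1$.
\begin{itemize}
\item Let $\alpha$ and $\beta$ be the numbers of cycle edges inside $A$ and inside $M$. Then $|V(C)\cap A|=k+\alpha$ and $|V(C)\cap M|=k+\beta$, so $6=2k+\alpha+\beta$.
\item We also have $k+\alpha\le3$ and $\beta\le e(G[M])\le1$.
\item These constraints leave exactly two possibilities: $(k,\alpha,\beta)=(3,0,0)$, or $(k,\alpha,\beta)=(2,1,1)$. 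The case $k=1$ is excluded because it would need $\alpha+\beta=4$, but $\alpha\le 3-k=2$ and $\beta\le1$.
\end{itemize}

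The case $(3,0,0)$ gives exactly the $6$-cycles of $K_{3,m}$. Each one is determined by a $3$-subset of $M$ together with one of the $6$ Hamilton cycles of $K_{3,3}$, giving $6\binom m3$ cycles.

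In the case $(2,1,1)$ the cycle has the shape $x$-$y$-$R_1$-$z$-$R_2$-$x$. Here:
\begin{itemize}
\item $xy$ is an edge of $G[A]$, and $z$ is the third vertex of $A$;
\item one of the $M$-runs $R_1,R_2$ is the unique edge $uv$ of $G[M]$, traversed in one of two directions;
\item the other $M$-run is a single vertex $w\in M\setminus\{u,v\}$.
\end{itemize}
To avoid double counting, fix an order $x<y$ on the endpoints of each $A$-edge and read every cycle starting along $x\to y$; this reading is unique for each unoriented cycle. For a fixed $A$-edge and a fixed $w$ there are then exactly $4$ cycles: $R_1=w$ with $R_2\in\{uv,vu\}$, or $R_1\in\{uv,vu\}$ with $R_2=w$. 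This yields $4e(G[A])e(G[M])(m-2)$. The formula also covers $e(G[M])=0$, since then this case is empty.

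The only delicate point I expect is this last step: making the run decomposition airtight and getting the constant $4$ right without double counting. The structural claims reduce to the routine distinctness checks above, which are where $m\ge5$ is used.
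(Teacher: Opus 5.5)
Your proof is correct and takes essentially the paper's route. The structural claims come from explicit $C_8$'s through spare vertices of $M$, and your vertex/edge count ($6=2k+\alpha+\beta$ with $k+\alpha\le3$, $\beta\le1$) is the paper's identity $t_C=3+(k-\ell)/2$ in different notation; it forces the same two types and the same four cycles per choice of $A$-edge and third $M$-vertex. The one step you leave unstated, $k\ge1$, is immediate, since a $6$-cycle cannot lie inside $A$ (only three vertices) or inside $G[M]$ (at most one edge).
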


\begin{proof}
First suppose that \(G[M]\) contains two distinct edges. There are two cases. If the two edges are
disjoint, write them as \(u_1u_2\) and \(u_3u_4\). Since \(m\ge5\), choose
\(u_5\in M\setminus\{u_1,u_2,u_3,u_4\}\). Then $u_1$-$u_2$-$a$-$u_3$-$u_4$-$b$-$u_5$-$c$-$u_1$
is a copy of \(C_8\). If the two edges share a vertex, write them as \(u_1u_2\) and \(u_1u_3\). Choose
distinct \(u_4,u_5\in M\setminus\{u_1,u_2,u_3\}\). Then $u_2$-$u_1$-$u_3$-$a$-$u_4$-$b$-$u_5$-$c$-$u_2$
is a copy of \(C_8\). Hence \(e(G[M])\le1\).
Likewise, if \(v\in Q\) has two neighbors \(u_1,u_2\in M\), choose distinct
\(u_3,u_4\in M\setminus\{u_1,u_2\}\). Then
  $v$-$u_1$-$a$-$u_3$-$b$-$u_4$-$c$-$u_2$-$v$
is a copy of \(C_8\). This proves the structural assertions.

Let \(C\) be a copy of \(C_6\) using \(t_C\) vertices of \(M\), and set \(a_C=6-t_C\). Let
\(k,\ell\), and \(h\) be the numbers of cycle-edges inside \(M\), inside \(A\), and across the
partition, respectively. Counting incidences with the vertices in the two parts gives
\[
  h+2k=2t_C,
  \qquad
  h+2\ell=2a_C.
\]
Subtracting and using \(a_C+t_C=6\) yields
\[
  t_C=3+\frac{k-\ell}{2}.
\]
Since at most three vertices lie in \(A\), we have \(t_C\ge3\), and hence \(k\ge\ell\). As
\(k\in\{0,1\}\) and \(k-\ell\) is even, the only possibilities are
\((k,\ell)\in\{(0,0),(1,1)\}\).

In the first case, the cycle alternates between all three vertices of \(A\) and a chosen 3-set of \(M\).
The resulting \(K_{3,3}\) has \(3!\,2!/2=6\) unoriented Hamilton cycles, contributing
\(6\binom m3\). In the second case, choose an edge \(ab\) of \(A\), the unique chosen edge \(xy\)
of \(M\), and a third vertex \(z\in M\setminus\{x,y\}\). If \(c\) is the third vertex of \(A\), then,
up to reversal, the four cycles are
\begin{align*}
  &a\text{-}b\text{-}x\text{-}y\text{-}c\text{-}z\text{-}a,\qquad a\text{-}b\text{-}y\text{-}x\text{-}c\text{-}z\text{-}a,\quad
  a\text{-}b\text{-}z\text{-}c\text{-}x\text{-}y\text{-}a,\qquad a\text{-}b\text{-}z\text{-}c\text{-}y\text{-}x\text{-}a.
\end{align*}
Thus this case contributes \(4e(G[A])e(G[M])(m-2)\), proving the formula.
\end{proof}

The next two lemmas bound cycles using vertices of \(Q\).

\begin{lemma}\label{lem:one-defect}
Assume \(m\ge5\) and fix \(v\in Q\). The number of copies of \(C_6\) containing \(v\) and no
other vertex of \(Q\) is at most
\[
  m(m-1)+300m.
\]
\end{lemma}

\begin{proof}
Put \(B=M\cup\{v\}\). By Lemma~\ref{lem:terminal}, \(G[M]\) has at most one edge and \(v\) has
at most one neighbor in \(M\); hence \(e(G[B])\le2\). For a cycle \(C\), let
\(b_C=|V(C)\cap B|\), and let \(k_C,\ell_C\) be the numbers of cycle-edges inside \(B\) and \(A\),
respectively. As above,
\begin{align}\label{eq:bc-value}
    b_C=3+\frac{k_C-\ell_C}{2}.
\end{align}
Since \(C\) contains \(v\), has no other vertex of \(Q\), and uses at most three vertices of \(A\),
we have \(b_C\ge3\). Since \(k_C\le2\), we also have \(b_C\le4\). The possible triples are therefore
exactly
\[
  (b_C,k_C,\ell_C)\in
  \{(3,0,0),(3,1,1),(3,2,2),(4,2,0)\}.
\]

In the case \((3,0,0)\), $C$ uses three vertices from \(B\) and all three vertices of \(A\), and it has no internal edge in either
part. Since $C$ contains \(v\), the three vertices from \(B\) are \(v,u_1,u_2\) for some
pair \(\{u_1,u_2\}\subseteq M\), and the cycle must alternate between \(A\) and
\(\{v,u_1,u_2\}\). Since \(v\notin M=N(a)\cap N(b)\cap N(c)\), it misses at least one vertex of
\(A\), and hence has two neighbors in \(A\). After the pair \(\{u_1,u_2\}\) is fixed, a
cycle can exist only if the two cycle-neighbors of \(v\) are its two available neighbors in \(A\).
The two vertices \(u_1,u_2\) can then be interchanged, so this gives at most two cycles for each
choice of \(\{u_1,u_2\}\). Thus this type contributes $2\binom m2=m(m-1)$.

We now give explicit bounds for the remaining three types. On any fixed six-vertex set there are at
most \(6!/(6\cdot 2)=60\) unoriented cyclic orders, so 60 is a universal upper bound for the number of copies of
\(C_6\) on that set.

For type \((3,1,1)\), the unique internal \(B\)-edge is either the possible edge of \(G[M]\), or the
possible edge from \(v\) to \(M\). Suppose first that the internal \(B\)-edge is the edge \(xy\) of
\(G[M]\). Since \(C\) contains \(v\) and has exactly three vertices in \(B\), the three \(B\)-vertices
are forced to be \(\{v,x,y\}\).
Suppose instead that the internal \(B\)-edge is the edge \(vx\) from \(v\) to \(M\). Then the third
\(B\)-vertex may be chosen from \(M\setminus\{x\}\). Hence there are
at most \(1+(m-1)=m\) possible six-vertex sets and at most \(60m\) cycles of this type.

For type \((3,2,2)\), we have \(k_C=2\). By Lemma~\ref{lem:terminal}, \(G[M]\) has at most one edge
and \(v\) has at most one neighbor in \(M\). 
Write these two edges as \(xy\in E(G[M])\) and \(zv\), where \(z\in M\). If
\(z=x\) or \(z=y\), then the two internal \(B\)-edges have exactly three endpoints, and hence the
\(B\)-vertex set is determined. If \(z\notin\{x,y\}\), then the two edges \(xy\) and \(zv\) have four
endpoints in \(B\), contradicting \(b_C=3\). Thus, whenever $(3,2,2)$ is feasible, the \(B\)-vertex set
is determined; since \(b_C=3\), all three vertices of \(A\) are also used. Hence the six-vertex set is
determined, giving at most \(60\) cycles.

Finally, consider type \((4,2,0)\).  Write these two $B$-edges as \(xy\in E(G[M])\) and \(zv\), where
\(z\in M\). If \(z=x\) or \(z=y\), then the two internal \(B\)-edges have exactly three endpoints in
\(B\). Since \(b_C=4\), the fourth \(B\)-vertex may be chosen from \(M\setminus\{x,y\}\), giving at
most \(m-2\) choices. If \(z\notin\{x,y\}\), then the two edges \(xy\) and \(zv\) already have four
endpoints in \(B\), so the \(B\)-vertex set is determined. Hence there are at most
\((m-2)+1=m-1\) choices for the \(B\)-vertex set. There are three choices for the two vertices used
from \(A\), and therefore at most \(\binom32\cdot60(m-1)=180(m-1)\) cycles of this type. For \(m\ge5\),
\[
  60m+60+180(m-1)\le300m.
\]
Adding the main type proves the stated bound.
\end{proof}

\begin{lemma}\label{lem:several-defects}
Assume \(m\ge5\). The number of copies of \(C_6\) containing at least two vertices of \(Q\) is
$
  O(q^2m+q^3),
$ 
with an absolute implicit constant.
\end{lemma}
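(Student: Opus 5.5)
The plan is to bound the number of such copies by counting vertex sets rather than cycles. On any fixed six-vertex set there are at most \(60\) copies of \(C_6\), so it suffices to show that the six-vertex sets spanning a \(C_6\) with at least two \(Q\)-vertices number \(O(q^2m+q^3)\). Write \(s=|V(C)\cap Q|\ge2\), and let \(t\) and \(r\) denote the numbers of vertices of \(C\) in \(M\) and in \(A\), so \(s+t+r=6\) and \(r\le3\). If \(s\ge3\), choosing the \(Q\)-vertices crudely already gives \(\binom q3\) choices times the remaining vertices, which is too many when \(t\ge1\). So the essential point is to control how many \(M\)-vertices the cycle can use, given that each \(Q\)-vertex sees at most one \(M\)-vertex and \(G[M]\) has at most one edge (Lemma~\ref{lem:terminal}).

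The key structural observation is the following. Every vertex of \(M\) on \(C\) has two cycle-neighbours. These lie in \(A\), in \(Q\) (at most one \(M\)-neighbour per \(Q\)-vertex), or in \(M\) (via the single possible edge of \(G[M]\)). I would classify the \(M\)-vertices on \(C\) as \emph{free} if both cycle-neighbours are in \(A\), and \emph{anchored} otherwise. An anchored vertex is determined, up to \(O(1)\) choices, by an adjacent \(Q\)-vertex on \(C\) (its unique \(M\)-neighbour) or by the fixed edge of \(G[M]\). Hence, once the \(Q\)-vertices of \(C\) are fixed (\(\binom{q}{s}\) ways), all anchored \(M\)-vertices are determined up to \(O(1)\) choices, and the vertices of \(A\) used contribute at most \(8\) choices.

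It then remains to bound the number \(f\) of free \(M\)-vertices. A free vertex needs two \(A\)-neighbours on the cycle, and distinct free vertices cannot share an \(A\)-neighbour on both sides arbitrarily: counting incidences of cycle edges at \(A\), the free vertices use \(2f\) edges at \(A\). Since \(C\) contains \(s\ge2\) vertices of \(Q\), it is not a pure alternation between \(A\) and \(M\). A short count then shows that \(f\le1\), and \(f=1\) is possible only when \(s=2\). In more detail, the cycle restricted to \(A\) has degree sum at most \(6\). If \(f\ge2\), the segment \(A\)-\(M\)-\(A\)-\(M\)-\(A\) consumes all of \(A\) and leaves a path of the remaining \(6-5=1\) vertex through \(Q\). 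That forces \(s\le1\), contradicting \(s\ge2\). Similarly, \(f=1\) together with \(s\ge3\) would require at least \(7\) vertices.

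Consequently the count of vertex sets is \(O(\binom q2 m)\) when \(s=2\) and \(f=1\), and \(O(q^s)\subseteq O(q^2+q^3+\dots)\) otherwise. Terms with \(s\ge4\) are bounded by choosing \(s\) vertices from \(Q\); since \(s\le 6\), these give \(q^s\) for \(s\le6\). This is a gap, because \(q^4\) is not \(O(q^3)\) in general.

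To repair it for \(s\ge4\), I would bound the contribution differently. Let \(s\ge4\) and suppose \(C\) contains three vertices \(x_1,x_2,x_3\) of \(Q\). Fix the positions of the other vertices and use that \(G\) is \(C_8\)-free with \(m\ge5\) large: a \(Q\)-vertex adjacent to two vertices of \(A\) together with the huge common neighbourhood \(M\) tends to create a \(C_8\). I expect this to be the main obstacle. The fallback is to count \(Q\)-paths on \(C\) by edges. A maximal run of \(Q\)-vertices on \(C\) with \(j\) vertices is determined by its endpoints and its internal path, and \(e(G[Q])\) can be bounded using Lemma~\ref{lem:basic}. In the subsequent application \(q\) is small (at most \(\delta n\)), so it may suffice to prove the weaker statement using \(q^{s}\le q^3\cdot q^{s-3}\) combined with \(C_8\)-freeness: at most three \(Q\)-vertices can be free choices, since the remaining ones are forced as common neighbours of fixed vertices. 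If \(C_8\)-freeness fails to force this, I would instead prove that each extra \(Q\)-vertex beyond three lies on a thin pair and use Lemma~\ref{lem:basic}.
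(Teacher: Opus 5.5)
Your free/anchored classification and the argument that $f\le1$ once $s\ge2$ are sound, and they match the paper's key claim. The paper phrases it via $M_1$, the $M$-vertices with a neighbour in $Q$, and $M_2$, the endpoints of the edge of $G[M]$. (You should also rule out two free vertices with the same pair of $A$-neighbours; they would close a $4$-cycle.) The counting after that step, however, has two genuine gaps.

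First, the claim that $f=1$ forces $s=2$ is false. A free vertex $u$ with cycle-neighbours $v,v'\in A$ uses only three vertices. So $v$-$u$-$v'$-$x_1$-$x_2$-$x_3$-$v$ with $x_1,x_2,x_3\in Q$ has exactly six vertices, not seven. Choosing $u$ and the three $Q$-vertices independently bounds these cycles only by $O(mq^3)$, which is not $O(q^2m+q^3)$. Lemma~\ref{lem:terminal} cannot fix this, and you need $C_8$-freeness here. Deleting $u$ leaves a four-edge $v$--$v'$ path, whose $M$-vertices are anchored and hence lie in $M_1\cup M_2$. The path therefore lies in $G'=G[A\cup Q\cup M_1\cup M_2]$. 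Two internally disjoint four-edge $v$--$v'$ paths would close a $C_8$, so every such path meets the internal vertex set of one fixed path. This gives $O(|V(G')|^2)=O(q^2)$ paths for each ordered pair $(v,v')$, hence $O(mq^2)$ cycles with $f=1$.

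Second, the case $f=0$, which contains every cycle with $s\ge4$, is left open. Your suggested repair, that extra $Q$-vertices are forced as common neighbours of fixed vertices, does not hold: $G[Q]$ can be an arbitrary $C_8$-free graph. The missing observation is that when $f=0$ every $M$-vertex on the cycle lies in $M_1\cup M_2$, so the whole cycle lies in $G'$. Since $|M_1|\le q$ and $|M_2|\le2$, $G'$ is a $C_8$-free graph on at most $2q+5$ vertices. The third assertion of Lemma~\ref{lem:basic} applied to $G'$ then gives $\N(C_6,G')=O(q^3)$, uniformly in $s$, with no vertex-set counting needed. With these two steps your outline becomes essentially the paper's proof.
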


\begin{proof}
Recall that \(Q=V(G)\setminus(A\cup M)\) and \(q=|Q|\). If \(q<2\), there is no cycle containing at
least two vertices of \(Q\), so the assertion is immediate. Assume henceforth that \(q\ge2\).

Let \(M_1\subseteq M\) be the set of vertices with a neighbor in \(Q\), and let \(M_2\) be the set of
endpoints of the possible edge in \(G[M]\). By Lemma~\ref{lem:terminal},
\[
  |M_1|\le q,
  \qquad
  |M_2|\le2.
\]
Put
\[
  m_1=|M\setminus(M_1\cup M_2)|,
  \qquad
  G'\coloneqq G[A\cup Q\cup M_1\cup M_2].
\]
First, we claim that a copy of \(C_6\) containing at least two vertices of \(Q\) contains at most one
vertex of \(M\setminus(M_1\cup M_2)\). Suppose otherwise, and let \(C\) contain distinct
\(u,u'\in M\setminus(M_1\cup M_2)\). By the definition of \(M_1\), neither \(u\) nor \(u'\) has a
neighbor in \(Q\); by the definition of \(M_2\), neither is incident to an edge of \(G[M]\). Since
\(u,u'\in M\), both are adjacent to every vertex of \(A\). Hence the two cycle-neighbors of each of
\(u\) and \(u'\) must lie in \(A\).

The cycle already contains \(u,u'\) and at least two vertices of \(Q\). If it used at most one vertex
of \(A\), then \(u\) could not have two distinct cycle-neighbors. Thus the only possible case is that
\(C\) uses exactly two vertices \(v,v'\in A\) and exactly two vertices \(w,w'\in Q\). Then both
\(u\) and \(u'\) must have cycle-neighborhood \(\{v,v'\}\), forcing the four cycle-edges
\(v u,uv',v'u',u'v\). These four edges form the cycle \(v\)-\(u\)-\(v'\)-\(u'\)-\(v\), so
\(v,u,v',u'\) already have degree two in the chosen cycle-edge set and cannot be connected to
\(w,w'\). This contradicts the assumption that \(C\) is a copy of \(C_6\).

Since \(q\ge2\) and \(|M_1\cup M_2|\le q+2\), the graph \(G'\) has \(O(q)\) vertices. If the cycle avoids
\(M\setminus(M_1\cup M_2)\), it lies in \(G'\), so Lemma~\ref{lem:basic} gives \(\mathrm{N}(C_6,G')=O(q^3)\).

Suppose the cycle contains exactly one vertex \(u\in M\setminus(M_1\cup M_2)\). Its two cycle-neighbors
are distinct vertices \(v,v'\in A\). After deleting \(u\), the remaining five vertices form a copy of
\(P_5\), namely a four-edge \(v\)--\(v'\) path in \(G'\). If no such path exists, there is nothing to
count. Otherwise fix one such path and let \(S_0\) be its internal set, so \(|S_0|=3\). Any other
four-edge \(v\)--\(v'\) path in \(G'\) must meet \(S_0\) internally, since two internally disjoint
four-edge \(v\)--\(v'\) paths form a copy of \(C_8\). For each \(x\in S_0\), after choosing the position of
\(x\) on the path, the other two internal vertices can be chosen in at most \(|V(G')|^2\) ordered ways.
Hence there are \(O(|V(G')|^2)=O(q^2)\) such paths for each ordered pair \((v,v')\), and there are at
most \(m_1\) choices for \(u\). This gives \(O(m_1q^2)\le O(mq^2)\) cycles.
\end{proof}

We now compare the upper bound for \(\N(C_6,G)\) obtained from the preceding lemmas with the benchmark value \(R(n)\) defined in \eqref{eq:Rn-def}. The aim is to show that if \(Q\) is nonempty but still small compared with \(M\), then \(\N(C_6,G)<R(n)\).

\begin{lemma}\label{lem:absorption}
There are absolute constants \(\delta_0>0\) and \(m_0\ge5\) such that the following holds. If
$m\ge m_0,0<q\le\delta_0m$, 
then
\[
  \N(C_6,G)<R(n).
\]
\end{lemma}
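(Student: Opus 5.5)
We split the copies of \(C_6\) in \(G\) according to how many vertices of \(Q\) they use, and bound each class with the lemmas just proved. Here \(n=3+m+q\).

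\begin{enumerate}
\item \textbf{Cycles avoiding \(Q\).} These lie in \(G[A\cup M]\). By Lemma~\ref{lem:terminal}, with \(e(G[A])\le3\) and \(e(G[M])\le1\), their number is at most
\[
  6\binom m3+12(m-2).
\]

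\item \textbf{Cycles with exactly one vertex of \(Q\).} Summing Lemma~\ref{lem:one-defect} over \(v\in Q\) bounds these by \(q\bigl(m(m-1)+300m\bigr)\).

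\item \textbf{Cycles with at least two vertices of \(Q\).} By Lemma~\ref{lem:several-defects} these number at most \(C_0(q^2m+q^3)\) for an absolute constant \(C_0\).
\end{enumerate}

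Adding the three classes,
\[
  \N(C_6,G)\le 6\binom m3+12(m-2)+qm^2+300qm+C_0(q^2m+q^3).
\]

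Now compare with \(R(n)=6\binom{n-3}{3}+12(n-5)\), where \(n-3=m+q\). Expanding,
\[
  6\binom{m+q}{3}-6\binom m3\;\ge\;6q\binom m2+6q^2\cdot\frac{m-1}{2}\cdot(\text{lower-order factor})\;\ge\;3qm(m-1).
\]
The first term alone is \(6q\binom m2=3qm(m-1)\). The linear terms satisfy \(12(n-5)-12(m-2)=12(q-2)\ge-24+12q\). Therefore
\[
  R(n)-\N(C_6,G)\;\ge\;3qm(m-1)-qm(m-1)-300qm-C_0(q^2m+q^3)-24.
\]
Here we used \(m^2=m(m-1)+m\), so \(qm^2\le qm(m-1)+qm\), and absorbed the spare \(qm\) into the \(300qm\) term.

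The main term is therefore \(2qm^2\) up to \(O(qm)\). It must beat three error terms:
\begin{itemize}
\item \(300qm\), which is fine once \(m\ge m_0\) is large;
\item \(C_0q^2m\le C_0\delta_0 qm^2\) and \(C_0q^3\le C_0\delta_0^2qm^2\), which are fine once \(\delta_0\) is small in terms of \(C_0\);
\item the constant \(24\), which is dominated since \(q\ge1\) gives \(qm^2\ge m^2\).
\end{itemize}
Choosing \(\delta_0\) with \(C_0(\delta_0+\delta_0^2)<1/2\) and \(m_0\) large enough then gives a strictly positive difference, so \(\N(C_6,G)<R(n)\).

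The main obstacle is ensuring that the leading coefficient for one-defect cycles, \(m(m-1)\) per defect vertex, is genuinely smaller than the gain \(3m(m-1)\) per extra vertex in \(R(n)\). Everything else is absorbed by the smallness of \(q/m\) and the largeness of \(m\). The one point to check carefully is that the cross term \(6q^2(m)\) from expanding \(\binom{m+q}{3}\) is nonnegative, so it can simply be dropped; it is a lower bound contribution only and only helps.
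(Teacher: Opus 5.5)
Your proposal is correct and follows essentially the same route as the paper. You use the same three-way split by the number of $Q$-vertices via Lemmas~\ref{lem:terminal}--\ref{lem:several-defects}, weigh the gain $6\binom{m+q}{3}-6\binom m3\ge 3qm(m-1)$ against the loss $qm(m-1)$, and choose $\delta_0$ and $m_0$ to absorb the $O(qm+q^2m+q^3)$ error. There are two harmless slips: $12(n-5)-12(m-2)=12q$, not $12(q-2)$, so the $-24$ is unnecessary; and re-absorbing the spare $qm$ gives $301qm$ rather than $300qm$. Neither affects the conclusion.
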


\begin{proof}
By Lemmas~\ref{lem:terminal}--\ref{lem:several-defects}, for an absolute constant \(C\),
\[
  \N(C_6,G)
  \le6\binom m3+12(m-2)+qm(m-1)
     +C(qm+q^2m+q^3).
\]
The explicit \(300qm\) term from Lemma~\ref{lem:one-defect} is included in the error term \(C(qm+q^2m+q^3)\).
Since \(n=3+m+q\),
\[
  R(n)=6\binom{m+q}{3}+12(m+q-2).
\]
Using
\[
  6\left(\binom{m+q}{3}-\binom m3\right)
  =3qm(m-1)+3mq(q-1)+q(q-1)(q-2),
\]
we obtain, for \(q\ge1\),
\begin{align*}
  R(n)-\N(C_6,G)
  &\ge2qm(m-1)+3mq(q-1)+q(q-1)(q-2)+12q-C(qm+q^2m+q^3)\\
  &\ge2qm(m-1)-C(qm+q^2m+q^3).
\end{align*}
Choose \(0<\delta_0\le1\) and \(m_0\ge5\) large enough
so that for every $m\ge m_0$, 
\[
  C(\delta_0+\delta_0^2)<\frac14\quad\text{ and }\quad 2m(m-1)-Cm\ge\frac32m^2.
\]

Since $q^2m\le\delta_0qm^2$ and $q^3\le\delta_0^2qm^2$,
we obtain
\[
  R(n)-\N(C_6,G)
  \ge\frac32qm^2-C(\delta_0+\delta_0^2)qm^2>0.
\]
\end{proof}

\begin{proof}[Proof of Theorem~\ref{thm:main}]
As observed in the introduction, the graph
$
  H_n=K_3\vee(K_2\cup I_{n-5})
$ 
is \(C_8\)-free; the count in Lemma~\ref{lem:terminal} gives \(\N(C_6,H_n)=R(n)\). Hence
\[
  \ex(n,C_6,C_8)\ge R(n).
\]

Let \(G\) be an extremal \(n\)-vertex \(C_8\)-free graph. Let \(\delta_0,m_0\) be given by
Lemma~\ref{lem:absorption}, decreasing \(\delta_0\) if necessary so that \(\delta_0\le1\). Apply
Lemma~\ref{lem:stability} with $\delta=\delta_0/10.$ 
We obtain a triple \(A=\{a,b,c\}\) such that, with \(M:=N(a)\cap N(b)\cap N(c)\), we have \(m:=|M|\ge(1-\delta)n\). Put \(Q:=V(G)\setminus(A\cup M)\) and \(q:=|Q|\). Then
\[
  q=n-3-m\le n-m\le\delta n.
\]
Consequently,
\[
  \frac qm\le\frac{\delta}{1-\delta}
  =\frac{\delta_0/10}{1-\delta_0/10}<\delta_0,
\]
where we used \(\delta_0\le1\). For sufficiently large \(n\) we also have \(m\ge m_0\). If \(q>0\),
then Lemma~\ref{lem:absorption} gives \(\N(C_6,G)<R(n)\), a contradiction. Thus
\(Q=\varnothing\).

Now \(V(G)=A\mathbin{\dot\cup}M\), every edge between \(A\) and \(M\) is present, and
Lemma~\ref{lem:terminal} gives \(e(G[M])\le1\). Put
\[
  r=e(G[A]),
  \qquad
  s=e(G[M]).
\]
Then
\[
  \N(C_6,G)=6\binom{n-3}{3}+4rs(n-5)\le R(n).
\]
Equality is necessary. Since \(0\le r\le3\), \(s\in\{0,1\}\), and \(n-5>0\), equality forces
\(rs=3\); hence \(r=3\) and \(s=1\). Therefore
\[
  G\cong K_3\vee(K_2\cup I_{n-5}).
\]
This proves both the exact value and uniqueness.
\end{proof}

\section*{Declaration on the use of AI}

The authors used generative AI tools to assist in discussing proof strategies, checking proofs, and improving exposition.

\end{document}